\newenvironment{proof}[1][Proof]{\noindent\textbf{#1.} }{\ \rule{0.5em}{0.5em}}
\newtheorem{De}{Definition}[section]
\newtheorem{Th}[De]{Theorem}
\newtheorem{Pro}[De]{Proposition}
\newtheorem{Le}[De]{Lemma}
\newtheorem{Co}[De]{Corollary}
\newtheorem{Rem}[De]{Remark}
\newtheorem{Ex}[De]{Example}
\newcommand{\Lieh}{\ensuremath{\mathfrak{h}}}
\newcommand{\Lieg}{\ensuremath{\mathfrak{g}}}
\newcommand{\Liem}{\ensuremath{\mathfrak{m}}}
\newcommand{\LieK}{\ensuremath{\mathcal{K}}}
\newcommand{\LieD}{\ensuremath{\mathfrak{D}}}
\newcommand{\Lien}{\ensuremath{\mathfrak{n}}}
\newcommand{\Leib}{\ensuremath{\mathsf{Leib}}}
\newbox\pullbackbox
  \newcommand{\eh}{\frak h}
\begin{document}

\centerline{\bf On central characteristic ideals and
quasi-Noetherian Leibniz algebras.}

\bigskip
\centerline{\bf Bell Bogmis N. G$^{1a}$, C. Tcheka$^{\ast 1b},$ G.
R. Biyogmam$^{2}$ .}

\bigskip

\centerline{$^{1a}$ Department of Mathematics, Faculty of
    science-University of Dschang} \centerline{Campus Box (237)67
    Dschang, Cameroon} \centerline{ {E-mail address}:  bellnarcisse3@gmail.com}

\bigskip

\centerline{$^{1b}$ Department of Mathematics, Faculty of
science-University of Dschang} \centerline{Campus Box (237)67
Dschang, Cameroon} \centerline{ {E-mail address}:
calvin.tcheka@univ-dschang.org}

\bigskip

\centerline{$^{2}$ Department of Mathematics, Georgia College \&
State University} \centerline{Campus Box 17 Milledgeville, GA
31061-0490} \centerline{ {E-mail address}: guy.biyogmam@gcsu.edu}

\bigskip
\date{}

\bigskip \bigskip \bigskip

\centerline{\bf Abstract} In this paper, we define on one hand, the
notions of characteristics as well as central characteristics ideals
of a given Leibniz algebra $\Lieg$ and provide a necessary condition
under which for two given subalgebras $\emptyset \neq J \subseteq K$
of $\Lieg$, $J$ is a central characteristics two-sided ideal of $K$.
On the other hand, we introduce the class of quasi-Noetherian
Leibniz algebras. This generalizes both the class  of Noetherian
Leibniz algebras and that of quasi-Noetherian Lie algebras
introduced in \cite{Falih A. M. Aldosray}. We provide a necessary
condition for a Leibniz algebra to be quasi-Noetherian. As in the
case of Lie algebras, quasi-Noetherian Leibniz algebras are shown to
be closed under quotients, but not under extensions. Finally, we
leverage the maximal condition of abelian ideals to provide a
characterization of Noetherian Leibniz algebras.
\bigskip

\noindent\textbf{2010 MSC:} 17A32, 17B55, 18B99.  \\\\
{\bf Keywords:} Leibniz algebras, quasi-Noetherian Leibniz algebras,
Characteristic ideal, central characteristic ideal.\\\\
$\ast$Corresponding author

\section{Introduction and Preliminaries}

The concept of Leibniz algebra was introduced in papers published in
the sixties  by Bloh \cite {bl}, and was popularized three decades
later by  Jean Louis Loday \cite{Lo 1, CCC}. These algebras
generalize Lie algebras. So, a lot of research in Leibniz algebras
investigates  analogous results in the category of Lie algebras.

The concept of Noetherian algebras  play an important role on the theories of infinite dimensional  algebras.
This concept along with some generalizations have been investigated by several authors in the cases of
Lie algebras \cite{BPP, Falih A. M. Aldosray,ASN} and rings \cite{DDD}.  As expected, a Noetherian Leibniz
algebra is a Leibniz algebra that satisfies the ascending chain condition on left and right ideals.
 Our aim in this paper is to introduce the class of quasi-Noetherian Leibniz algebras and
 investigate analogue results as in Lie algebras and other algebraic structures.
 The proposed definition in this paper generalizes both Noetherian Leibniz algebras and quasi-Noetherian
  Lie algebras introduced in \cite{Falih A. M. Aldosray}.  This class  contains solvable Leibniz algebras. 
The paper is organized  as follows: For the remaining of this
section, we recall some definitions and  background results needed
in this study. We introduce the notions of characteristic ideals and
central characteristics ideals in the category of Leibniz algebras
and establish some important results in section 2. In section 3, we
define quasi-Noetherian Leibniz algebras, provide some examples and
prove the main results of the paper. In particular,  we provide
several necessary conditions for a Leibniz algebra to be
quasi-Noetherian. We also provide a characterization of
quasi-Noetherian Leibniz algebras and prove that this class of
Leibniz algebras is closed under quotients but not under extensions.
Section 4 provides a condition under which a quasi-Noetherian
Leibniz algebra is Noetherian.


For the notation and terminology defined in this paper, the standard
reference is [8]. For the sake of convenience,
we summarize the main concepts needed in this work.\\

Let $\mathbb{K}$ be a fixed ground field. Throughout the paper, all vector spaces and tensor products are considered over $\mathbb{K}$.

    Recall that a \emph{Leibniz algebra} \cite{CCC} is a vector space ${\Lieg}$  equipped with a bilinear map $[-,-] : \Lieg \otimes \Lieg \to \Lieg$, usually called \emph{Leibniz bracket} of ${\Lieg}$,  satisfying the left \emph{Leibniz identity}:
    \[
    [x,[y,z]]= [[x,y],z]+[y,[x,z]],
    \]
    for all $x, y, z \in \Lieg.$\\
A subalgebra ${\eh}$ of a Leibniz algebra ${\Lieg}$ is said to be a \emph{left (resp. right) ideal} of ${\Lieg}$ if $ [h,g]\in {\eh}$  (resp.  $ [g,h]\in {\eh}$), for all $h \in {\eh}$, $g \in {\Lieg}$. We write ${\eh} \lhd_{l} \Lieg$ (resp. ${\eh} \lhd_{r} \Lieg$ ) if ${\eh}$ is a left (resp.  right) ideal of $\Lieg.$ If ${\eh}$ is both
left and right ideal, then ${\eh}$ is called a \emph{two-sided ideal} of ${\Lieg}$ (or simply an ideal of $\Lieg$), and we write ${\eh} \lhd \Lieg$ .
 In this case, $\Lieg/\Lieh$ naturally inherits a Leibniz algebra structure. Recursively  ${\eh} \lhd^{n} \Lieg $ if ${\eh} \lhd J \lhd^{n-1} \Lieg $,
 and ${\eh}$ is said to be a $n$-step two-sided subideal of the Leibniz algebra $\Lieg.$ ${\eh}$ is a subideal (written ${\eh}$ si $\Lieg$)
 if it is a $n$-step two-sided subideal of $\Lieg,$ for some $ n $. \\
 \indent Let $\Lieg$ be a Leibniz algebra and let $I,J \lhd \Lieg$.
Then  $$[[I,I],J] \subseteq [I,[I,J]].$$
       Denote by $ \Leib(\Lieg)$ the
subspace of ${\Lieg}$ spanned by all elements of the form $[x,x]$,
$x \in \Lieg$. Since $ \Leib(\Lieg)$ is a two-sided ideal of $\Lieg,$
often referred to as the Leibniz kernel of $\Lieg,$ then the quotient ${\Lieg}_ {_{\rm Lie}}=\Lieg/ \Leib(\Lieg)$ is a Lie algebra,  referred to as the  Liezation of $\Lieg$.\\
The left-center of a Leibniz algebra $\Lieg$ is the two-sided ideal $$Z^l(\Lieg) = \{ x \in \Lieg\mid [x,y]=0 ~\text{for all}~y \in \Lieg \},$$
while the right-center of a Leibniz algebra $\Lieg$ is the set $$Z^r(\Lieg) = \{ x \in \Lieg\mid [y,x]=0 ~\text{for all}~y \in \Lieg \}.$$ So,
$Z(\Lieg) = Z^l(\Lieg) \cap Z^r(\Lieg)$ is called the center of $\Lieg.$ It is a two-sided ideal of $\Lieg$.\\
The lower central series of $\Lieg$ is
the sequence of two-sided ideals of $\Lieg$ defined inductively by
\[ \cdots \trianglelefteq {\gamma_i(\Lieg)} \trianglelefteq \cdots
\trianglelefteq \gamma_2(\Lieg)  \trianglelefteq {\gamma_1(\Lieg)}=
{\Lieg} \quad \text{and} \quad \gamma_i(\Lieg) =[{\Lieg},
\gamma_{i-1}(\Lieg)], \quad   i \geq 2.
\]
 The derived series $ \Lieg^{(n)} $ is defined recursively by
\begin{equation*}
\begin{aligned}
\Lieg^{(0)}& = \Lieg ~~ ~~ ~~ ~~ and ~~ \Lieg^{(n+1)} & = [\Lieg^{(n)}, \Lieg^{(n)}]~,~~~~ n \geq 0.
\end{aligned}
\end{equation*}
 \begin{Le}\label{2.1}
    \begin{enumerate}
        \item[(a)] If $\Lieg_{1}$ and $\Lieg_{2}$ are two Leibniz algebras, then
        $(\Lieg_{1} \oplus \Lieg_{2})^{(k)}=\Lieg^{(k)}_{1} \oplus
        \Lieg^{(k)}_{2},$ for all $k\in \mathbb{N}^{\ast}.$
        \item[(b)] If $\Lieg$ is a Leibniz algebra, then $(\Lieg^{(m)})^{(n)}= \Lieg^{(m+n)}$, for all $m, n \in \mathbb{N}$.
    \end{enumerate}
\end{Le}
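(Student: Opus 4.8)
The plan is to prove both parts by straightforward induction, using nothing beyond the recursive definition $\Lieg^{(n+1)} = [\Lieg^{(n)}, \Lieg^{(n)}]$ together with the structure of the Leibniz bracket on a direct sum. For part (a) I would induct on $k$. The only genuinely structural input is the observation that on $\Lieg_{1} \oplus \Lieg_{2}$ the Leibniz bracket is computed componentwise, the cross-brackets $[\Lieg_{1}, \Lieg_{2}]$ and $[\Lieg_{2}, \Lieg_{1}]$ both vanishing by definition of the direct sum. Granting this, the base case $k=1$ reads $(\Lieg_{1} \oplus \Lieg_{2})^{(1)} = [\Lieg_{1} \oplus \Lieg_{2}, \Lieg_{1} \oplus \Lieg_{2}] = [\Lieg_{1}, \Lieg_{1}] \oplus [\Lieg_{2}, \Lieg_{2}] = \Lieg_{1}^{(1)} \oplus \Lieg_{2}^{(1)}$.

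For the inductive step of (a), assume $(\Lieg_{1} \oplus \Lieg_{2})^{(k)} = \Lieg_{1}^{(k)} \oplus \Lieg_{2}^{(k)}$. Since $\Lieg_{1}^{(k)}$ and $\Lieg_{2}^{(k)}$ are themselves (ideals, hence) subalgebras of $\Lieg_{1}$ and $\Lieg_{2}$ respectively, the same componentwise bracket computation applies to $\Lieg_{1}^{(k)} \oplus \Lieg_{2}^{(k)}$, so $(\Lieg_{1} \oplus \Lieg_{2})^{(k+1)} = [\Lieg_{1}^{(k)} \oplus \Lieg_{2}^{(k)}, \Lieg_{1}^{(k)} \oplus \Lieg_{2}^{(k)}] = [\Lieg_{1}^{(k)}, \Lieg_{1}^{(k)}] \oplus [\Lieg_{2}^{(k)}, \Lieg_{2}^{(k)}] = \Lieg_{1}^{(k+1)} \oplus \Lieg_{2}^{(k+1)}$, which closes the induction.

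For part (b), I would fix $m$ and induct on $n$. The base case $n=0$ is immediate from the definition, since $(\Lieg^{(m)})^{(0)} = \Lieg^{(m)} = \Lieg^{(m+0)}$. For the inductive step, assume $(\Lieg^{(m)})^{(n)} = \Lieg^{(m+n)}$; then unfolding the recursion gives $(\Lieg^{(m)})^{(n+1)} = [(\Lieg^{(m)})^{(n)}, (\Lieg^{(m)})^{(n)}] = [\Lieg^{(m+n)}, \Lieg^{(m+n)}] = \Lieg^{(m+n+1)} = \Lieg^{(m+(n+1))}$, completing the induction.

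I do not expect a serious obstacle here: both statements reduce to mechanical applications of the recursive definition. The one point requiring care—and the only place where anything specific to Leibniz algebras enters—is recording that the bracket on $\Lieg_{1} \oplus \Lieg_{2}$ restricts to each factor and kills cross terms, so that the derived operation respects the direct sum decomposition at every stage.
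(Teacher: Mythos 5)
Your proof is correct and follows essentially the same route as the paper's: induction on $k$ for part (a) using the componentwise definition of the bracket on $\Lieg_{1}\oplus\Lieg_{2}$, and induction on $n$ with $m$ fixed for part (b), unfolding the recursive definition of the derived series. No gaps; the one structural point you flag (the bracket on the direct sum kills cross terms) is exactly the ingredient the paper relies on as well.
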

\begin{proof}
    \begin{enumerate}
        \item[(a)] Consider the respective Leibniz brackets on $\Lieg_{1}$ and $\Lieg_{2}$ defined as follows:
        $$ \begin{array}{ccc}
            \Lieg_{1} \otimes \Lieg_{1}& \stackrel{[-,-]_{\Lieg_{1}}}\longrightarrow & \Lieg_{1}\\
            & (x_{1},y_{1})\longmapsto & [x_{1}, y_{1}]_{\Lieg_{1}}
        \end{array};$$
        $$ \begin{array}{ccc}
            \Lieg_{2} \otimes \Lieg_{2}& \stackrel{[-,-]_{\Lieg_{2}}}\longrightarrow & \Lieg_{2}\\
            & (x_{2},y_{2})\longmapsto & [x_{2}, y_{2}]_{\Lieg_{2}}
        \end{array}.$$
        It is obvious that $ \Lieg_{1}\oplus\Lieg_{2}$ endowed with the
        bracket $[-,-]_{\Lieg_{1}\oplus \Lieg_{2} }$ defined as follows:
        $$ \begin{array}{ccc}
            (\Lieg_{1}\oplus\Lieg_{2}) \otimes (\Lieg_{1}\oplus \Lieg_{2}) & \stackrel{[-,-]_{\Lieg_{1}\oplus \Lieg_{2} }}\longrightarrow & \Lieg_{1}\oplus\Lieg_{2} \\
            & (x_{1}+x_{2},y_{1}+y_{2})\longmapsto & [x_{1}+x_{2},
            y_{1}+y_{2}]_{\Lieg_{1}\oplus \Lieg_{2}}= [x_{1},
            y_{1}]_{\Lieg_{1}}+[x_{2}, y_{2}]_{\Lieg_{2}}
        \end{array}$$
        is a Leibniz algebra. Next, we verify that $(\Lieg_{1} \oplus \Lieg_{2})^{(k)}=\Lieg^{(k)}_{1} \oplus
        \Lieg^{(k)}_{2},$ for all $k\in \mathbb{N}^{\ast}.$ We proceed
        by induction on $k.$ For $k= 1$, we have $$\begin{array}{ccc}(\Lieg_{1} \oplus \Lieg_{2})^{(1)}&=&[\Lieg_{1} \oplus \Lieg_{2}, \Lieg_{1} \oplus \Lieg_{2}]_{\Lieg_{1}\oplus\Lieg_{2}}\\
            &=& [\Lieg_{1}, \Lieg_{1}]_{\Lieg_{1}} \oplus [\Lieg_{2}, \Lieg_{2}]_{\Lieg_{2}}\\
            &=& \Lieg^{(1)}_{1} \oplus \Lieg^{(1)}_{2}.\end{array}$$
            Assume that the identity holds for all integer $r<
        k$ and let's show it also holds for $k.$ $$\begin{array}{ccc}(\Lieg_{1} \oplus \Lieg_{2})^{(k)}&=&[(\Lieg_{1} \oplus
            \Lieg_{2})^{(k-1)}, (\Lieg_{1} \oplus \Lieg_{2})^{(k-1)}]_{\Lieg_{1}\oplus \Lieg_{2} }\\
            &=&[\Lieg_{1}^{(k-1)} \oplus
            \Lieg_{2}^{(k-1)}, \Lieg_{1}^{(k-1)} \oplus \Lieg_{2}^{(k-1)}]_{\Lieg_{1}\oplus \Lieg_{2} }\\
            &=& [\Lieg_{1}^{(k-1)}, \Lieg_{1}^{(k-1)}]_{\Lieg_{1}} \oplus [\Lieg_{2}^{(k-1)}, \Lieg_{2}^{(k-1)}]_{\Lieg_{2}}\\
            &=& \Lieg^{(k)}_{1} \oplus \Lieg^{(k)}_{2}.\end{array}$$ So for all
        $k\in \mathbb{N},$ $(\Lieg_{1} \oplus \Lieg_{2})^{(k)}=\Lieg^{(k)}_{1} \oplus
        \Lieg^{(k)}_{2}.$
        \item [$(b)$] We proceed by induction on $n$ while fixing $m$.\\
        For $n= 0$, $(\Lieg^{(m)})^{(0)}= \Lieg^{(m)};$  for $n= 1,$ $\Lieg^{(m+1)}=[\Lieg^{(m)},\Lieg^{(m)}]= (\Lieg^{(m)})^{(1)}.$\\
        Assume that the identity holds for all $k< n$ and let us show it is true for
        $n.$
        $$\begin{array}{ccc}
            \Lieg^{(m+n)}& =& [\Lieg^{(m +
                n-1)},\Lieg^{(m + n-1)}]\\
            &=&[(\Lieg^{(m)})^{(n-1)},(\Lieg^{(m)})^{(n-1)}]\\
            &=& (\Lieg^{(m)})^{(n)}.
        \end{array}$$
    \end{enumerate}
    Thus for all $m, n \in \mathbb{N}, \Lieg^{(m+n)}
    =(\Lieg^{(m)})^{(n)}$.
\end{proof}\\

\begin{De}\cite{A. A. O.}
A Leibniz algebra $\Lieg$ is said to be simple if its Liezation is a
${simple}$ Lie algebra and its Leibniz kernel
$Leib(\Lieg)$ is a simple ideal. Equivalently, $\Lieg$ is
${simple}$ if and only if its Leibniz kernel
$Leib(\Lieg)$ is the only non-trivial ideal of $\Lieg$.
\end{De}

\begin{De}\cite{D. K. M.}
    A Leibniz algebra $\Lieg$ is said to be
    ${nilpotent}$ of class $c$ (respectively ${solvable}$  of derived
    length $ \leq n $) if\ $\gamma_{c+1}(\Lieg) = 0$ and
    $\gamma_c(\Lieg) \neq 0$ (resp. $ \Lieg^{(n)} = 0
    $).

\end{De}
\indent Let $ \Liem$ and $\Lien$ be two non-empty subsets of $\Lieg,$ denote by $ \langle \Liem\rangle $ the subalgebra
generated by $\Liem$ and define the centralizer of $\Liem$ and $\Lien$ over $\Lieg$ by $C_{\Lieg}(\Liem,  \Lien)=\{a\in \Lieg:[a, b]\in \Lien ~ \mbox{for all} ~ b\in \Liem\}.$
 So the left center of $\Lieg$ is $ C_{\Lieg}(\Lieg, 0)= Z^{l}(\Lieg)$.

\indent The closure operators $Q$ and $ E $ are defined as
follows: Let $ \mathfrak{X} $ be a class of Leibniz algebras:
\begin{equation*}
\begin{aligned}
Q\mathfrak{X} &= \{\Lieg/\eh : \eh\lhd \Lieg ~ \& ~ \Lieg \in \mathfrak{X}\};\\
E\mathfrak{X} &= \{\Lieg : \exists \eh, \eh\lhd \Lieg ~\&  ~\eh, \Lieg/\eh \in \mathfrak{X}\}.
\end{aligned}
\end{equation*}

 If $ \mathfrak{X}, \mathfrak{Y} $ are two
classes of Leibniz algebras, then the class $
\mathfrak{X}\mathfrak{Y} $ of $\mathfrak{X}$-by-$\mathfrak{Y} $
Leibniz algebras consists of all Leibniz algebras $\Lieg$ such that there exists $\eh\in \mathfrak{X}$ such that $ \eh \lhd \Lieg, ~ \Lieg/\eh \in \mathfrak{Y} $.
\begin{Rem}
 In the category of Leibniz algebras, $\mathfrak{X}\mathfrak{X}=E\mathfrak{X}$.
\end{Rem}
\indent For the remainder of the paper, we use the following notations for these classes of Leibniz algebras.
\begin{equation*}
\begin{aligned}
\mathfrak{F} & =~ finite\mbox{-}dimensional\\
\mathfrak{U} & = ~ abelian\\
\mathfrak{N} & = ~nilpotent\\
\mathfrak{N}_{c} &=~ nilpotent ~ of ~ class \leq c\\
E\mathfrak{U} &= ~ solvable\\
\mathfrak{U}^{d} &=~ solvable ~ of ~ derived ~ length \leq d.
\end{aligned}
\end{equation*}
Any other notation will be introduced as needed.

 \section{Characteristic Ideal in Leibniz Algebras}
In this section, we introduce  the
notion of characteristic ideal in the category of Leibniz algebras.
\begin{De}\cite{BT}  A derivation of $\Lieg$ is a $
    \mathbb{K}$-linear map $\delta : \Lieg \to \Lieg$ such that for all
    $ x,y \in \Lieg$, $ \delta([x,y]) = [\delta(x),y] + [x,\delta(y)]~.$
\end{De}

For a Leibniz algebra $\Lieg$ and $a \in \Lieg$, denote the left multiplication operator by
$ {L_{a}=[a, -]}: \Lieg \longrightarrow  \Lieg$ defined by
$L_{a}(x)=[a, x],$
and the right multiplication operator ${R_{a}=[- , a]}: \Lieg\longrightarrow  \Lieg$ defined by
$ R_{a}(x)=[x, a].$ It is obvious that the right multiplication operator $R_{a}$ is not a derivation while the left multiplication operator $L_{a}$ is a
derivation.

\begin{De}
    Let $\Lieg$ be a Leibniz algebra. A two-sided ideal $ \eh$ of
    $\Lieg$ is said to be characteristic, written $\eh$ $ch$ $\Lieg$, if
    it is invariant under all derivations of $\Lieg$.
\end{De}

\begin{De}
    Let $\Lieg$ be a Leibniz algebra. A two-sided ideal $ \eh$ of
    $\Lieg$  is said to be central characteristic, written $\eh$
    $ch^{z}$ $\Lieg$, if it is invariant under all central derivations
    of $\Lieg$.
\end{De}

\begin{Rem}\label{3.4}
    \begin{enumerate}
        \item It is obvious that any characteristic ideal of $\Lieg$ is a central
        characteristic ideal of $\Lieg$.
        \item If $\eh$ $ch$ $J \lhd \Lieg$, then for all $g\in\Lieg,$ $L_{\Lieg}\in Der(\Lieg)$ and the restriction $L_{\Lieg}|_{J}\in Der(J).$ Since $\eh$ $ch$ $J,$ then for all $h\in\eh,$ $[g,h]\in\eh.$ Therefore    $\eh \lhd_{l} \Lieg$. If in
        addition, $\eh$ is invariant under all right multiplication
        operators, then $\eh \lhd \Lieg$. In particular, if $\Lieg$ is a Lie
        algebra and $\eh$ ch $J \lhd \Lieg$, then $\eh \lhd \Lieg;$ as proven in  \cite{Falih A. M. Aldosray}.
    \end{enumerate}
\end{Rem}

\begin{De}
    If $\Lieg$ is a Leibniz algebra having a two-sided ideal $I$ and a
    subalgebra $J$ such that $\Lieg= I + J$ and $I \cap J= \{0\}, $ then
    $\Lieg$ is said to be a split extension of $I$ by $J$.
\end{De}
\indent Denote by $\frak{S}_{\frak{E}}$ the set of all Leibniz
algebras which are split extensions. The class of Leibniz algebras
of type $T_{Lie}$ recently introduced in
\cite{KTB}, is a subclass of $\frak{S}_{\frak{E}}.$
\begin{Pro}
    Let $\Lieg= I + J$ be a split extension. Then there exists a homomorphism
    of Leibniz algebras
    $$\begin{array}{ccc} J & \stackrel{^{I}\theta_{J}}\longrightarrow &
        \mbox{Der}(I)\\
        a & \longmapsto & ^{I}\theta_{J}(a)= L_{a}
    \end{array}.$$ Moreover, there is a one-to-one correspondence between split
    extensions and the set of homomorphisms $^{I}\theta_{J}.$
\end{Pro}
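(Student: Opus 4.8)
The plan is to verify in turn that ${}^{I}\theta_{J}$ lands in $\Der(I)$, that it respects the brackets, and then to exhibit the inverse construction giving the claimed bijection. For the first point, fix $a \in J$ and $x \in I$; since $I \lhd \Lieg$ is a two-sided ideal we have $[a,x] \in I$, so $L_a$ restricts to an endomorphism $L_a|_I$ of $I$. To see it is a derivation I would use the observation recorded earlier in this section, that every left multiplication $L_a$ is a derivation of $\Lieg$, and restrict it: for $x,y \in I$ the left Leibniz identity gives
$$L_a([x,y]) = [a,[x,y]] = [[a,x],y] + [x,[a,y]] = [L_a(x),y] + [x,L_a(y)],$$
where every bracket remains inside $I$. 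Hence ${}^{I}\theta_{J}(a) = L_a|_I \in \Der(I)$, and the map is well defined.

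Next I would check that ${}^{I}\theta_{J}$ is a homomorphism of Leibniz algebras, where $\Der(I)$ carries its usual Lie (hence Leibniz) structure given by the commutator $[\delta_1,\delta_2] = \delta_1\delta_2 - \delta_2\delta_1$. Since $J$ is a subalgebra, $[a,b] \in J$ for $a,b \in J$, and for any $x \in I$ the left Leibniz identity applied to $a,b,x$ reads $[a,[b,x]] = [[a,b],x] + [b,[a,x]]$, i.e. $[[a,b],x] = (L_a L_b - L_b L_a)(x)$. This says precisely $L_{[a,b]} = [L_a,L_b]$ on $I$, that is ${}^{I}\theta_{J}([a,b]) = [\,{}^{I}\theta_{J}(a),\, {}^{I}\theta_{J}(b)\,]$, so ${}^{I}\theta_{J}$ is a Leibniz homomorphism.

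For the correspondence, the forward map sends a split extension $\Lieg = I + J$ to the homomorphism ${}^{I}\theta_{J}$ just built. To invert it, I would start from a homomorphism $\theta : J \to \Der(I)$ and define a bracket on the vector space $I \oplus J$ that restricts to the given brackets on $I$ and on $J$, sets $[a,x] := \theta(a)(x)$ for the left mixed terms with $a \in J$, $x \in I$, and prescribes the remaining right mixed terms; one then verifies that $I \oplus J$ is a Leibniz algebra in which $I$ is a two-sided ideal, $J$ a subalgebra, and $I \cap J = \{0\}$, so that it is a split extension. Running the forward map on this semidirect product returns $\theta$, and reconstructing from an arbitrary split extension returns its original bracket, which yields the bijection.

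The main obstacle is the inverse direction. Unlike the Lie case, a Leibniz split extension also records a right action $x \mapsto [x,a]$ of $J$ on $I$, which by the remark recalled earlier is not a derivation and is not captured by $\theta$; so the subtle part is to determine exactly which right-multiplication data are admissible and to confirm that the left Leibniz identity holds for the reconstructed bracket in all the cases obtained by drawing the three arguments from $I$ and $J$. The genuinely mixed cases are where the homomorphism property of $\theta$ is used, exactly as in the homomorphism step above, and checking them is the technical heart of making the correspondence precise and one-to-one.
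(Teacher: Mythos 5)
Your verification that ${}^{I}\theta_{J}(a)=L_a|_I$ is a well-defined derivation of $I$ and that ${}^{I}\theta_{J}$ preserves brackets is correct and is essentially the computation the paper performs; in fact the paper omits the preliminary check that $L_a$ restricts to $I$ and that this restriction lies in $\Der(I)$, which you supply using $I\lhd\Lieg$ and the left Leibniz identity.

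For the claimed one-to-one correspondence, however, your proposal stops short of a proof, and you have correctly identified why: a split extension of Leibniz algebras is not determined by the left action of $J$ on $I$ alone, since the right mixed brackets $[x,a]$ (with $x\in I$, $a\in J$) are additional data not recorded by $\theta$. Two split extensions with the same $I$, the same $J$ and the same left action but different right actions would yield the same homomorphism ${}^{I}\theta_{J}$, so the assignment cannot be injective as stated; conversely, reconstructing a split extension from an abstract $\theta:J\to\Der(I)$ requires choosing admissible right-multiplication data and verifying the Leibniz identity in every mixed case, which you announce as ``the technical heart'' but do not carry out. You should be aware that the paper's own proof does not close this gap either: it deduces injectivity from the observation that ${}^{I}\theta_{J}={}^{I^{\prime}}\theta_{J^{\prime}}$ forces $J=J^{\prime}$, which does not address the right-action ambiguity, and asserts surjectivity ``by construction'' without constructing anything. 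So your diagnosis is accurate, but to obtain an actual proof one must either restrict to extensions in which the right action is prescribed (for instance determined by the left action, or identically zero) or enlarge the target of the correspondence to include the right-action data; as written, neither your argument nor the paper's establishes the stated bijection.
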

\begin{proof}
    Clearly, it is obvious that
    $^{I}\theta_{J}$ is a $\mathbb{K}$-linear map. To verify that
    $^{I}\theta_{J}$ is compatible with the brackets, consider $a, b \in
    J$ and $x\in I.$ One has $$\begin{array}{ccc}
        L_{[a, b]}(x)&=& [[a, b], x]\\
        &=& [a, [b, x]] -  [b, [a, x]]\\
        &=& L_{a}\circ L_{b}(x) - L_{b}\circ L_{a}(x)\\
        &=& [L_{a}, L_{b}](x).
    \end{array}$$
    Thus $^{I}\theta_{J}$ is a homomorphism of Leibniz algebras. \\Now, set $\LieD = \{ ^{I}\theta_{J}, I + J ~ ~ \mbox{split
        extension} \}.$
    Since $^{I}\theta_{J} = ~^{I^{\prime}}\theta_{J^{\prime}}$ implies that $J = J^{\prime}$, for
    some split extensions $\Lieg=I + J=I^{\prime} + J^{\prime}$ with $I \cap J=I^{\prime} \cap J^{\prime}=\{0\};$
    therefore the map which assigns to any split extension $I + J$ the
    homomorphism of Leibniz algebra $^{I}\theta_{J}$  is injective.
    Moreover, the map is surjective by construction.
\end{proof}
\begin{Pro}
    Let $\{0\}\neq J \subseteq \LieK$ be two subalgebras of a Leibniz
    algebra $\Lieg$ and assume that $J \lhd \Lieg$ whenever $\LieK \lhd
    \Lieg.$ Then $J$ is central characteristic in $\LieK.$
\end{Pro}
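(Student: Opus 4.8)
The plan is to realise an arbitrary central derivation of $\LieK$ as a left-multiplication operator inside a suitable ambient Leibniz algebra having $\LieK$ as a two-sided ideal, and then to feed that ambient algebra into the hypothesis. So let $\delta$ be a central derivation of $\LieK$, i.e. a derivation with $\delta(\LieK) \subseteq Z(\LieK)$. It suffices to prove $\delta(J) \subseteq J$ for this arbitrary $\delta$, since invariance under every central derivation is exactly the statement that $J$ is central characteristic in $\LieK$, i.e. $J$ $ch^{z}$ $\LieK$.

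First I would form the split extension $\Lieg = \LieK \oplus \mathbb{K}d$ (a direct sum of vector spaces), with bracket extending that of $\LieK$ by the rules $[d,k] = \delta(k)$ and $[k,d] = 0$ for $k \in \LieK$, together with $[d,d] = 0$. By construction $L_{d}|_{\LieK} = \delta$, in line with the homomorphism $^{\LieK}\theta_{\mathbb{K}d}$ of the split-extension Proposition above. That $\LieK$ is a two-sided ideal of this $\Lieg$ is immediate: the left-ideal condition follows from $[d,\LieK] = \delta(\LieK) \subseteq \LieK$ and the right-ideal condition from $[\LieK,d] = 0$.

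The step I expect to be the main obstacle is verifying that this bracket genuinely satisfies the left Leibniz identity, for it is precisely here that the centrality of $\delta$ is indispensable. When all three arguments lie in $\LieK$ the identity is inherited from $\LieK$; the triple $(d,y,z)$ with $y,z \in \LieK$ reproduces exactly the derivation rule for $\delta$; and the triples in which $d$ sits in the second or third slot collapse once $[k,d]=0$ and $[d,d]=0$ are invoked. The representative delicate case is $(x,d,z)$ with $x,z \in \LieK$, where the identity demands $[x,\delta(z)] = \delta([x,z])$: the left-hand side vanishes because $\delta(z) \in Z^{r}(\LieK)$, while $\delta([x,z]) = [\delta(x),z] + [x,\delta(z)] = 0$ because $\delta(x) \in Z^{l}(\LieK)$ and $\delta(z) \in Z^{r}(\LieK)$. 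Thus centrality forces $\delta$ to annihilate all brackets of $\LieK$, which is what makes the mixed cases close; a non-central derivation would break the identity with this choice of bracket.

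Finally, with $\Lieg$ now a bona fide Leibniz algebra satisfying $\LieK \lhd \Lieg$, the hypothesis applied to this $\Lieg$ yields $J \lhd \Lieg$. In particular $J$ is a left ideal, so $[d,J] \subseteq J$; since $[d,j] = \delta(j)$ for every $j \in J$, this reads $\delta(J) \subseteq J$. As $\delta$ was an arbitrary central derivation of $\LieK$, the subalgebra $J$ is invariant under all of them, i.e. $J$ $ch^{z}$ $\LieK$, completing the argument.
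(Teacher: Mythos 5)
Your proposal is correct and follows essentially the same route as the paper: both form the one-dimensional split extension $\LieK\oplus\mathbb{K}d$ in which the central derivation is realised as a multiplication operator, observe that $\LieK$ is a two-sided ideal there, and invoke the hypothesis to conclude $[d,J]\subseteq J$, i.e.\ $d(J)\subseteq J$. The only difference is cosmetic (the paper uses the bracket $[k+\alpha d,k'+\beta d]=[k,k']+\beta d(k)-\alpha d(k')$ while you set $[k,d]=0$), and your explicit check of the Leibniz identity, pinpointing exactly where centrality of $d$ is needed, spells out a step the paper declares obvious.
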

\begin{proof}
    Let $d$ be any central derivation of $\LieK$ and consider the
    split extension $\LieK \oplus <d>$ where $<d>$ denotes the Lie
    subalgebra of $\mbox{Der(\LieK)}$ spanned by the central derivation
    $d.$ It is obvious that $\LieK \oplus <d>$ endowed with the bracket
    defined by $[k + \alpha d, k^{\prime}+ \beta d]= [k, k^{\prime}] +
    \beta d(k)- \alpha d(k^{\prime})$ is a Leibniz algebra; where $k,
    k^{\prime} \in \LieK$ and $\alpha, \beta\in \mathbb{K}.$ Now notice
    that $\LieK$ is a two-sided ideal of $\LieK \oplus <d>,$ so by hypothesis, $J$ is also a two-sided ideal of $\LieK \oplus <d>$ and
    we have $[J, d]\subseteq J$ and $[d, J]\subseteq J.$ Moreover, $[d,
    J]= -d(J)$ and $[J, d]= d(J).$ Hence $J$ is central
    characteristic in $\LieK.$
\end{proof}

\section{Quasi-Noetherian Leibniz Algebras}
In this section, we introduce and study the concept of quasi-Noetherian Leibniz algebras.
 \subsection{Definitions  and Properties}

\begin{De}
    A Leibniz algebra $\Lieg$ is said to be left (resp. right) \textit{ quasi-Noetherian} if for every ascending chain of ideals
    \begin{equation*}
    I_{0}\subseteq I_{1} \subseteq I_{2} \subseteq \cdots,
    \end{equation*}
    there exists $ m_{l} ~ (\mbox{resp.} ~ m_{r}) \in \mathbb{N} $ such that
$
    \left[\Lieg^{(m_{l})}, \bigcup_{n \in \mathbb{N}}I_{n}\right] \subseteq I_{m_{l} }
$

    (resp.
$
        \left[\bigcup_{n \in \mathbb{N}}I_{n}, \Lieg^{(m_{r})}\right] \subseteq I_{m_{r}}).
$
    We say that a Leibniz algebra $\Lieg$ is \textit{ quasi-Noetherian} if it is both left and right \textit{ quasi-Noetherian}.

\end{De}

\noindent{\bf Notations}\\
The class of left(resp. right)  quasi-Noetherian
Leibniz algebras is denoted  $ q\max$-$\lhd_{l}$ (resp. $
q\max$-$\lhd_{r}$) and that of all quasi-Noetherian Leibniz
algebras is denoted by  $ q\max$-$\lhd. $

\begin{Rem}\label{4.2}

\begin{enumerate}
\item  As in the case of Lie algebras, a Leibniz algebra is said to be Noetherian if any ascending chain of two sided-ideals terminates.
 Thus every Noetherian Leibniz algebra is quasi-Noetherian.
Indeed, since for every ascending chain of ideals
    \begin{equation*}
    I_{0}\subseteq I_{1} \subseteq I_{2} \subseteq \cdots,
    \end{equation*}
$\cup_{n \in \mathbb{N}}I_{n}\subseteq I_{q} $ for some $q\in \mathbb{N}.$ It follows that $ [\Lieg^{(q)}, \cup_{n \in \mathbb{N}}I_{n}]\subseteq I_{q} $ and  $ [\cup_{n \in \mathbb{N}}I_{n}, \Lieg^{(q)}]\subseteq I_{q}.$
  \item Obviously, every solvable Leibniz algebra is quasi-Noetherian.
  \end{enumerate}
\end{Rem}

\indent  In the next results, we provide necessary conditions for a Leibniz algebra to be quasi-Noetherian.

\begin{Th}

    Let $\Lieg$ be a Leibniz algebra over any field $ \mathbb{K} $.
        If for any nonempty collection $ \mathcal{C} $ of ideals of $ \Lieg $, there exists $ I \in \mathcal{C} $ and $ m \in \mathbb{N} $ such that $ [\Lieg^{(m)},J] \subseteq I $ and $ [J, \Lieg^{(m)}] \subseteq I $ for any $ J\in  \mathcal{C} $ with $ I \subseteq J  $, then  $\Lieg$ is a quasi-Noetherian Leibniz algebra.
\end{Th}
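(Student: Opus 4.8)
The plan is to verify the two defining conditions of a quasi-Noetherian Leibniz algebra directly, from a single application of the hypothesis to the given chain viewed as a collection. First I would take an arbitrary ascending chain of ideals $I_{0}\subseteq I_{1}\subseteq I_{2}\subseteq\cdots$ and apply the hypothesis to the nonempty collection $\mathcal{C}=\{I_{n}:n\in\mathbb{N}\}$. This produces an ideal $I\in\mathcal{C}$, say $I=I_{k}$, together with an integer $m\in\mathbb{N}$, such that $[\Lieg^{(m)},J]\subseteq I_{k}$ and $[J,\Lieg^{(m)}]\subseteq I_{k}$ for every $J\in\mathcal{C}$ with $I_{k}\subseteq J$.

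Next I would observe that, because the chain is ascending, every $J=I_{n}$ with $n\geq k$ satisfies $I_{k}\subseteq I_{n}$, so the conclusion applies to all such indices: $[\Lieg^{(m)},I_{n}]\subseteq I_{k}$ and $[I_{n},\Lieg^{(m)}]\subseteq I_{k}$ for all $n\geq k$. The key step is then to promote these finitely-indexed containments to the full union. Since $\bigcup_{n}I_{n}=\bigcup_{n\geq k}I_{n}$, any spanning element $[x,y]$ of $[\Lieg^{(m)},\bigcup_{n}I_{n}]$, with $x\in\Lieg^{(m)}$ and $y\in\bigcup_{n}I_{n}$, lies in $I_{n}$ for some $n\geq k$, hence in $I_{k}$; as $I_{k}$ is a subspace this yields $[\Lieg^{(m)},\bigcup_{n}I_{n}]\subseteq I_{k}$, and symmetrically $[\bigcup_{n}I_{n},\Lieg^{(m)}]\subseteq I_{k}$.

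Finally I would reconcile the exponent $m$ with the index $k$, which is the only genuine bookkeeping point, since the definition forces the power of $\Lieg$ and the index of the absorbing ideal to coincide. Setting $m_{l}=m_{r}=\max(m,k)$ and using that the derived series is decreasing, so that $\Lieg^{(\max(m,k))}\subseteq\Lieg^{(m)}$ (which follows from Lemma \ref{2.1}(b) together with $(\Lieg^{(m)})^{(t)}\subseteq\Lieg^{(m)}$), and that the chain is ascending, so that $I_{k}\subseteq I_{\max(m,k)}$, I would obtain $[\Lieg^{(m_{l})},\bigcup_{n}I_{n}]\subseteq[\Lieg^{(m)},\bigcup_{n}I_{n}]\subseteq I_{k}\subseteq I_{m_{l}}$ and the analogous chain on the right with $m_{r}$. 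These are precisely the left and right quasi-Noetherian conditions, so $\Lieg\in q\max\text{-}\lhd$.

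The only mild obstacle is the index/exponent matching just described; the absorption of the union into $I_{k}$ is immediate from the subspace structure of $I_{k}$, and no additional hypotheses on $\Lieg$ (such as finite dimensionality or that $\bigcup_{n}I_{n}$ itself be an ideal) are required.
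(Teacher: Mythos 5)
Your proof is correct and follows essentially the same route as the paper's: apply the hypothesis to the chain viewed as a collection, absorb the union into the distinguished ideal $I_k$, and then reconcile the exponent with the index by passing to $\max(m,k)$ using that the derived series descends and the chain ascends. Your write-up is in fact slightly more explicit than the paper's about why the containment passes from the individual $I_n$ to the union.
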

\begin{proof}\\
\indent Assume the given condition is satisfied and consider an arbitrary
ascending chain of ideals of $\Lieg$
\begin{equation*}
I_{0}\subseteq I_{1} \subseteq I_{2} \subseteq \cdots .
\end{equation*}
Let $\mathcal{C}:=\{ I_{r}: r\in \mathbb{N}\}$ be the set of
ideals of  the above ascending sequence. From
hypothesis, there exists $m_{0}\in \mathbb{N}$ and an ideal $I_{m_{1}}\in
\mathcal{C}$ such that $[\Lieg^{(m_{0})}, I_{k}] \subseteq
I_{m_{1}}$ for all $I_{k}\in \mathcal{C}$ with $m_{1} \leq k.$
Since the sequence $\{\Lieg^{(s)}= [\Lieg^{(s-1)},
\Lieg^{(s-1)}], s\ge 1\}$ is a descending sequence of ideals
of the Leibniz algebra $\Lieg,$ we have $[\Lieg^{(m)}, I_{k}]
\subseteq I_{m},$ for all $I_{k}\in \mathcal{C}$ with $\max\{m_0,
m_1\} = m  \leq k.$ Thus we finally obtain $[\Lieg^{(m)},
\bigcup_{r\in \mathbb{N}} I_{r}] \subseteq I_{m},$ that is, $\Lieg$
is a left quasi-Noetherian Leibniz algebra. In a similar way,
one can show that $\Lieg$ is also a right quasi-Noetherian
Leibniz algebra. Therefore $ \Lieg \in q \max$-$\lhd $.
\end{proof}

\begin{Pro}
Let $I$ be a two-sided ideal of a Leibniz algebra $\Lieg$. If $
\Lieg/I $ is solvable and the sets of ideals
 $ \{[\mathcal{H},I] : \mathcal{H} \lhd \Lieg\} $ and $ \{[I,\mathcal{H}] : \mathcal{H} \lhd \Lieg\} $ each have  a maximal element,
  then $\Lieg$ is quasi-Noetherian.
\end{Pro}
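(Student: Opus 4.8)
The plan is to verify the two defining conditions of \textit{quasi-Noetherian} directly for an arbitrary ascending chain of ideals $I_{0}\subseteq I_{1}\subseteq\cdots$, writing $U=\bigcup_{n\in\mathbb{N}}I_{n}$ for its union, which is again a two-sided ideal of $\Lieg$. The solvability of $\Lieg/I$ supplies an integer $d$ with $(\Lieg/I)^{(d)}=0$, i.e.\ $\Lieg^{(d)}\subseteq I$; since the derived series is descending this forces $\Lieg^{(m)}\subseteq I$ for every $m\ge d$. The function of $I$ is thus to swallow the large factor $\Lieg^{(m)}$, replacing it by the fixed ideal $I$, after which the hypothesis on the two families of products will confine the resulting bracket inside a single term of the chain.

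The content of the hypothesis needs to be pinned down first. For \emph{any} ideal $\mathcal{H}\lhd\Lieg$ one has $\mathcal{H}\subseteq\Lieg$, whence $[\mathcal{H},I]\subseteq[\Lieg,I]$ and $[I,\mathcal{H}]\subseteq[I,\Lieg]$; so each family already possesses a greatest element and the bare existence of a maximal element is automatic. I would therefore read the hypothesis as a maximal (ascending chain) condition, providing maximal elements for the ascending subfamilies arising from our chain, namely $\{[I,I_{n}]\}_{n}$ and $\{[I_{n},I]\}_{n}$. Because these subfamilies are ascending chains, a maximal element is a terminal one: there are indices $N_{1},N_{2}$ with $[I,I_{n}]=[I,I_{N_{1}}]$ for all $n\ge N_{1}$ and $[I_{n},I]=[I_{N_{2}},I]$ for all $n\ge N_{2}$.

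For the left condition I would set $m_{l}=\max\{d,N_{1}\}$. Any element of $[I,U]$ is a finite sum of brackets $[a,u]$ with each $u$ in some $I_{n}$, so $[I,U]=\bigcup_{n}[I,I_{n}]=[I,I_{N_{1}}]$ by stabilisation; and since $I_{N_{1}}$ is two-sided, $[I,I_{N_{1}}]\subseteq[\Lieg,I_{N_{1}}]\subseteq I_{N_{1}}$. Together with $\Lieg^{(m_{l})}\subseteq I$ this gives
\[
\bigl[\Lieg^{(m_{l})},U\bigr]\subseteq[I,U]=[I,I_{N_{1}}]\subseteq I_{N_{1}}\subseteq I_{m_{l}},
\]
which is the left quasi-Noetherian condition. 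The right case is symmetric: with $m_{r}=\max\{d,N_{2}\}$ one has $[U,I]=\bigcup_{n}[I_{n},I]=[I_{N_{2}},I]\subseteq[I_{N_{2}},\Lieg]\subseteq I_{N_{2}}$, so $[U,\Lieg^{(m_{r})}]\subseteq[U,I]\subseteq I_{N_{2}}\subseteq I_{m_{r}}$. Hence $\Lieg$ is both left and right quasi-Noetherian, i.e.\ $\Lieg\in q\max$-$\lhd$.

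The genuinely delicate point is exactly the reading of the hypothesis isolated in the second paragraph: taken literally over all ideals it is automatic, so the argument rests on interpreting it as the maximal condition that forces the chains $[I,I_{n}]$ and $[I_{n},I]$ to stabilise. Everything else is routine, namely that a two-sided ideal absorbs its brackets with $I$ and that $\Lieg^{(m)}$ is absorbed by $I$ once $m\ge d$. One could alternatively deduce the statement from the preceding Theorem by checking its collection-wise hypothesis, but the direct chain argument above is shorter and exhibits the roles of solvability and of the maximal condition more transparently.
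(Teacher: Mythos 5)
Your proof is correct and follows essentially the same route as the paper's: solvability of $\Lieg/I$ forces $\Lieg^{(m)}\subseteq I$ for large $m$, and the maximality hypothesis then traps $[I,\bigcup_n I_n]$ and $[\bigcup_n I_n,I]$ inside a single term of the chain. Your observation that the hypothesis read literally is automatic (since $[\Lieg,I]$ is a greatest element of the family) and must be read as the maximal condition is well taken --- the paper's own proof silently relies on exactly that stronger reading when it asserts that the subfamilies $\{[\mathcal{J}_i,I]\}$ and $\{[I,\mathcal{J}_i]\}$ inherit maximal elements.
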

\begin{proof}\\
Let  $\mathcal{J}_{0} \subseteq \mathcal{J}_{1} \subseteq \mathcal{J}_{2} \subseteq \cdots, $ be
 an ascending sequence of ideals of $\Lieg$. Set $\mathcal{J}=\underset{i \in \mathbb{N}}{\cup}\mathcal{J}_{i}.$
 Consider the sets $\Lieg_{1}=\{[\mathcal{J}_{i}, I], i\in \mathbb{N} \}$ and $\Lieg_{2}= \{[I, \mathcal{J}_{i}], i\in \mathbb{N} \}.$
  Then $\Lieg_{1}\subseteq \{[\mathcal{H},I] : \mathcal{H} \lhd \Lieg\} $ and $\Lieg_{2}\subseteq \{[I,\mathcal{H}]: \mathcal{H} \lhd \Lieg\}.$
   Since  $\{[\mathcal{H},I] : \mathcal{H} \lhd \Lieg\} $ and $ \{[I,\mathcal{H}] : \mathcal{H} \lhd \Lieg\} $ each admit a maximal element,
 so do $\Lieg_{1}$ and $\Lieg_{2}$. Now let $m_{0}, m_{1}\in \mathbb{N}$ such that $[\mathcal{J}_{m_{0}}, I]$ and $[I,  \mathcal{J}_{m_{1}}]$
 are the respective maximal elements of $\Lieg_{1}$ and $\Lieg_{2}.$ Taking $m= \mbox{max}\{m_{0}, m_{1}\},$ we have $[\mathcal{J}, I] \subseteq [\mathcal{J}_{m}, I]$
  and $[I, \mathcal{J}] \subseteq [I,  \mathcal{J}_{m}].$
  Moreover, since $\frac{\Lieg}{I}$ is solvable, there exists $k\in \mathbb{N}$ such that
     $(\frac{\Lieg}{I})^{(k)} =0$. Thus $\Lieg^{(k+1)} \subseteq I$, so $[\mathcal{J},\Lieg^{(k+1)}] \subseteq [\mathcal{J},I]
     \subseteq [\mathcal{J}_{m},I] \subseteq \mathcal{J}_{m}$.
Now, taking $p=\max \{k+1, m \}$, we have $[\mathcal{J},\Lieg^{(p)}] \subseteq \mathcal{J}_{p}$, i.e. $[\underset{i \in \mathbb{N}}{\cup}\mathcal{J}_{i},\,\Lieg^{(p)}] \subseteq \mathcal{J}_{p}.$
    Hence $ \Lieg \in q \max$-$\lhd_r .$
    Similarly, one shows that there exists $s\in \mathbb{N}$ such that $[\Lieg^{(s)},\underset{i \in \mathbb{N}}{\cup}\mathcal{J}_{i}] \subseteq
    \mathcal{J}_{s}$. That is, $\Lieg \in q \max$-$\lhd_{l} $. Therefore $\Lieg \in q \max$-$\lhd.$
\end{proof}\\

\begin{Th}\label{4.5}
    Let $\Lieg$ be a Leibniz algebra, with $\mathcal{K} \lhd \Lieg$. If $\mathcal{K}$ is Noetherian and $\frac{\Lieg}{\mathcal{K}}$ is quasi-Noetherian, then $\Lieg$ is quasi-Noetherian.
\end{Th}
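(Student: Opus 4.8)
The plan is to verify the quasi-Noetherian condition directly. I fix an arbitrary ascending chain of two-sided ideals $I_0 \subseteq I_1 \subseteq \cdots$ of $\Lieg$ and set $I = \bigcup_{n\in\mathbb{N}} I_n$, which is again a two-sided ideal. The goal is to produce an index $m$ with $[\Lieg^{(m)}, I] \subseteq I_m$ (the left condition); the right condition $[I, \Lieg^{(m')}] \subseteq I_{m'}$ follows by the same argument with the two bracket slots interchanged. The two hypotheses will be fed by the two natural chains manufactured from $\{I_n\}$: the \emph{intersection chain} $\{I_n \cap \mathcal{K}\}$ inside $\mathcal{K}$, and the \emph{projection chain} $\{\overline{I_n}\}$ inside $\Lieg/\mathcal{K}$, where $\overline{(-)}$ denotes the image under the quotient map $\pi \colon \Lieg \to \Lieg/\mathcal{K}$.

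First I would exploit the Noetherian kernel. Each $I_n \cap \mathcal{K}$ is a two-sided ideal of $\Lieg$ contained in $\mathcal{K}$, hence a two-sided ideal of $\mathcal{K}$, so the chain $I_0 \cap \mathcal{K} \subseteq I_1 \cap \mathcal{K} \subseteq \cdots$ stabilizes: there is $p$ with $I_n \cap \mathcal{K} = I_p \cap \mathcal{K}$ for all $n \geq p$. Since $I \cap \mathcal{K} = \bigcup_n (I_n \cap \mathcal{K})$, this yields the key equality $I \cap \mathcal{K} = I_p \cap \mathcal{K}$. Next I would exploit the quasi-Noetherian quotient. The $\overline{I_n}$ form an ascending chain of ideals of $\Lieg/\mathcal{K}$ with union $\overline{I}$, so left quasi-Noetherianity supplies $q$ with $[(\Lieg/\mathcal{K})^{(q)}, \overline{I}] \subseteq \overline{I_q}$. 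Using that the surjective homomorphism $\pi$ preserves derived powers, namely $(\Lieg/\mathcal{K})^{(q)} = \pi(\Lieg^{(q)})$, this reads $\pi([\Lieg^{(q)}, I]) \subseteq \pi(I_q)$, i.e. $[\Lieg^{(q)}, I] \subseteq I_q + \mathcal{K}$ upon taking preimages.

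Finally I would combine the two estimates. Put $m = \max\{p, q\}$. Since $\{\Lieg^{(s)}\}$ is descending and $\{I_s\}$ ascending, monotonicity upgrades the quotient estimate to $[\Lieg^{(m)}, I] \subseteq I_m + \mathcal{K}$. Take $z \in [\Lieg^{(m)}, I]$ and write $z = a + k$ with $a \in I_m$ and $k \in \mathcal{K}$. The decisive point is that $z$ itself lies in $I$, because $[\Lieg^{(m)}, I] \subseteq [\Lieg, I] \subseteq I$; hence $k = z - a \in I \cap \mathcal{K} = I_p \cap \mathcal{K} \subseteq I_m$ by the stabilization of the intersection chain together with $m \geq p$. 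Therefore $z = a + k \in I_m$, giving $[\Lieg^{(m)}, I] \subseteq I_m$, so $\Lieg \in q\max$-$\lhd_{l}$; the symmetric argument produces an index $m'$ with $[I, \Lieg^{(m')}] \subseteq I_{m'}$, so $\Lieg \in q\max$-$\lhd_{r}$, whence $\Lieg$ is quasi-Noetherian.

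I expect the main obstacle to be precisely the absorption step of the last paragraph: the quotient controls $[\Lieg^{(m)}, I]$ only modulo $\mathcal{K}$, so the residual $\mathcal{K}$-component of an element must still be shown to lie in $I_m$. The observation that makes this work is that this residual component also belongs to $I$ (since the whole bracket does), which is what forces it into the controlled ideal $I \cap \mathcal{K}$; without routing through $I$, the Noetherian condition on $\mathcal{K}$ could not be brought to bear, and the slack $+\,\mathcal{K}$ would be irremovable.
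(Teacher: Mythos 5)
Your proposal is correct and follows essentially the same route as the paper's proof: stabilize the intersection chain $\{I_n\cap\mathcal{K}\}$ using Noetherianity of $\mathcal{K}$, control $[\Lieg^{(q)},I]$ modulo $\mathcal{K}$ using quasi-Noetherianity of $\Lieg/\mathcal{K}$, and remove the $\mathcal{K}$-slack by observing that the residual component lies in $I\cap\mathcal{K}$ because $[\Lieg^{(m)},I]\subseteq I$. The only difference is cosmetic: you take the maximum of the two indices once at the end, whereas the paper carries the element-wise computation $x-y\in\mathcal{K}\cap\bigcup_i\mathcal{J}_i$ explicitly before passing to $p=\max\{m,s_0\}$.
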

\begin{proof}
Consider $\mathcal{J}_{0} \subseteq \mathcal{J}_{1} \subseteq
\mathcal{J}_{2} \cdots,$ an ascending chain of ideals of $\Lieg$.
Then $\mathcal{J}_{0} \cap \mathcal{K} \subseteq \mathcal{J}_{1}
\cap \mathcal{K} \subseteq \mathcal{J}_{2} \cap \mathcal{K} \cdots$
is an ascending chain of ideals of $\mathcal{K}$ and \\
$\frac{\mathcal{J}_{0} +  \mathcal{K}}{ \mathcal{K}} \subseteq
\frac{\mathcal{J}_{1}+  \mathcal{K}}{ \mathcal{K}} \subseteq
\frac{\mathcal{J}_{2} +  \mathcal{K}}{ \mathcal{K}} \cdots$ is an
ascending chain of ideals of $\frac{\Lieg}{ \mathcal{K}}$. Since $
\mathcal{K}$ is Noetherian, then the ascending chain of ideals $
\{ \mathcal{J}_{s} \cap \mathcal{K}, s\in \mathbb{N}\} $ terminates.
Set $\mathcal{J}_{s_{0}} \cap \mathcal{K}$, its maximal element.
On the other hand since $\frac{\Lieg}{ \mathcal{K}}$ is
quasi-Noetherian, there exists $m\in \mathbb{N}$ such that
$[\Lieg^{(m)},\underset{i \in
\mathbb{N}}{\cup}\mathcal{J}_{i}] + \mathcal{K} \subseteq
\mathcal{J}_{m} + \mathcal{K}$ and  $[\underset{i \in
\mathbb{N}}{\cup}\mathcal{J}_{i},\Lieg^{(m)}] + \mathcal{K}
\subseteq \mathcal{J}_{m} + \mathcal{K}.$
 Also, $[\Lieg^{(m)},\underset{i \in
\mathbb{N}}{\cup}\mathcal{J}_{i}] \subseteq \underset{i \in
\mathbb{N}}{\cup}\mathcal{J}_{i}$ and
      $[\underset{i \in \mathbb{N}}{\cup}\mathcal{J}_{i},\Lieg^{(m)}] \subseteq \underset{i \in
      \mathbb{N}}{\cup}\mathcal{J}_{i}$. Now let  $x\in [\Lieg^{(m)},\underset{i \in
    \mathbb{N}}{\cup}\mathcal{J}_{i}].$ Then $x+ \mathcal{K}=y+\mathcal{K}$ for some $y\in \mathcal{J}_{m},$ that is $x-y\in \mathcal{K}.$ Since $\mathcal{J}_{m}\subseteq \underset{i \in
\mathbb{N}}{\cup}\mathcal{J}_{i} $ and $[\Lieg^{(m)},\underset{i \in
\mathbb{N}}{\cup}\mathcal{J}_{i}] \subseteq \underset{i \in
\mathbb{N}}{\cup}\mathcal{J}_{i},$ it follows that $x-y\in  \underset{i \in
\mathbb{N}}{\cup}\mathcal{J}_{i} $ and thus $x-y\in  \mathcal{K}\cap(\underset{i \in
\mathbb{N}}{\cup}\mathcal{J}_{i} )=\mathcal{K} \cap \mathcal{J}_{s_{0}}.$ So $x\in\mathcal{J}_{m}+\mathcal{K} \cap \mathcal{J}_{s_{0}}.$ This proves that  $[\Lieg^{(m)},\underset{i \in
    \mathbb{N}}{\cup}\mathcal{J}_{i}]\subseteq \mathcal{J}_{m}+\mathcal{K} \cap \mathcal{J}_{s_{0}}.$
    Set $p= \mbox{max}\{m, s_{0}\}$ and as $\{\Lieg^{(k)}, k\in
\mathbb{N}\}$ is a descending sequence, then
$\left[\Lieg^{(p)},\underset{i \in
\mathbb{N}}{\cup}\mathcal{J}_{i} \right] \subseteq \mathcal{J}_{p}.$ Similarly one can show that $[\underset{i \in
\mathbb{N}}{\cup}\mathcal{J}_{i},\Lieg^{(q)}] \subseteq
\mathcal{J}_{q}$ for some $q\in \mathbb{N}.$ Hence
 $\Lieg$ is quasi-Noetherian.
\end{proof}
\begin{Co}\label{4.6}
    Let $\Lieg$ be a Leibniz algebra, with $I \lhd \Lieg$.
    \begin{enumerate}
        \item [$(a)$] If $I$ is simple and $\frac{\Lieg}{I}$ is solvable, then $\Lieg$ is quasi-Noetherian.
        \item [$(b)$] If $I$ is simple and $\frac{\Lieg}{I}$ is abelian, then $\Lieg$ is quasi-Noetherian..
    \end{enumerate}
\end{Co}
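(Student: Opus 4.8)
The plan is to deduce both statements from Theorem \ref{4.5}, taking $\mathcal{K} = I$ in each case. Recall that Theorem \ref{4.5} asserts that if a two-sided ideal $\mathcal{K} \lhd \Lieg$ is Noetherian and $\Lieg/\mathcal{K}$ is quasi-Noetherian, then $\Lieg$ itself is quasi-Noetherian. Thus it suffices to verify these two hypotheses for $\mathcal{K} = I$.

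First I would observe that a simple ideal $I$ is automatically Noetherian. By the definition of simplicity, the only non-trivial two-sided ideal of $I$ is its Leibniz kernel $\Leib(I)$, so the complete list of two-sided ideals of $I$ is $\{0\} \subseteq \Leib(I) \subseteq I$ (with $\Leib(I)=\{0\}$ in the Lie case). Since $I$ therefore possesses only finitely many two-sided ideals, every ascending chain of ideals of $I$ is eventually constant, and so $I$ satisfies the ascending chain condition; that is, $I$ is Noetherian.

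For part $(a)$, the quotient $\Lieg/I$ is solvable by hypothesis, and by Remark \ref{4.2}$(2)$ every solvable Leibniz algebra is quasi-Noetherian, so $\Lieg/I$ is quasi-Noetherian. Applying Theorem \ref{4.5} with $\mathcal{K}=I$ then yields that $\Lieg$ is quasi-Noetherian. For part $(b)$, note that an abelian Leibniz algebra has derived length $\leq 1$ and is therefore solvable; hence if $\Lieg/I$ is abelian it is in particular solvable, and the conclusion follows immediately from part $(a)$.

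Since both ingredients — that a simple Leibniz algebra has only finitely many two-sided ideals, and that solvable algebras are quasi-Noetherian — are already essentially recorded in the excerpt, I expect no substantial obstacle here. The only point deserving a moment's care is confirming that the ideal-theoretic reading of \emph{simple} (the Definition, in which $\Leib(I)$ is the unique non-trivial two-sided ideal) genuinely forces the ascending chain condition, which it does precisely because the resulting ideal lattice is finite.
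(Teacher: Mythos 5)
Your proposal is correct and follows essentially the same route as the paper: show that the simple ideal $I$ is Noetherian because its ideal lattice is finite, note that $\Lieg/I$ solvable implies quasi-Noetherian by Remark \ref{4.2}, and conclude by Theorem \ref{4.5}, with $(b)$ reduced to $(a)$ since abelian implies solvable. Your spelling out of the ideal lattice $\{0\}\subseteq \Leib(I)\subseteq I$ is just a slightly more explicit version of the paper's observation that a simple ideal admits only one non-trivial (sub)ideal.
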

\begin{proof}
 To prove $(a)$, let $\Lieg$ be a Leibniz algebra and $I$ a simple ideal of $\Lieg$
        such that $\frac{\Lieg}{I}$ is a solvable Leibniz
        algebra. Since $I$ is simple,  it strictly contains only one non-trivial subideal. So any ascending chain of subideals of $I$ terminates.
    So $I$ is Noetherian. Moreover,  by Remark \ref{4.2}, $\frac{\Lieg}{I}$ is solvable, and thus it is quasi-Noetherian.
        The result follows by Theorem \ref{4.5}.
 $(b)$ is a consequence of $(a)$ since if $\frac{\Lieg}{I}$ is abelian, then $\frac{\Lieg}{I}$ is
        solvable.
\end{proof}

 In the next results, we establish some useful properties about closure.
\begin{Pro}
    The class $ q \max $-$\lhd  $ of Leibniz algebras is $ Q$-closed but not $E$-closed.
\end{Pro}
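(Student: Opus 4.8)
The plan is to treat the two assertions separately: the $Q$-closure by transporting ascending chains along the quotient map, and the failure of $E$-closure by exhibiting an explicit extension of quasi-Noetherian algebras which is not quasi-Noetherian.

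For $Q$-closure, take $\Lieg \in q\max$-$\lhd$ and $\eh \lhd \Lieg$, with quotient map $\pi : \Lieg \to \Lieg/\eh$. Given an ascending chain of ideals $\bar I_0 \subseteq \bar I_1 \subseteq \cdots$ of $\Lieg/\eh$, I would lift it through the ideal correspondence by putting $I_n := \pi^{-1}(\bar I_n)$; these are ideals of $\Lieg$ containing $\eh$, they form an ascending chain, and $\pi(\bigcup_n I_n) = \bigcup_n \bar I_n$. The one structural fact needed is that $\pi$ carries the derived series onto the derived series, i.e. $(\Lieg/\eh)^{(m)} = \pi(\Lieg^{(m)})$, which follows by induction from the identity $\pi([X,Y]) = [\pi(X), \pi(Y)]$ for subspaces $X,Y$. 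Applying left quasi-Noetherianity of $\Lieg$ to $\{I_n\}$ yields an $m$ with $[\Lieg^{(m)}, \bigcup_n I_n] \subseteq I_m$; pushing this forward along $\pi$ gives $[(\Lieg/\eh)^{(m)}, \bigcup_n \bar I_n] = \pi([\Lieg^{(m)}, \bigcup_n I_n]) \subseteq \pi(I_m) = \bar I_m$, so $\Lieg/\eh$ is left quasi-Noetherian. The right condition is obtained verbatim with the bracket arguments swapped, and the two together give $\Lieg/\eh \in q\max$-$\lhd$.

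For the failure of $E$-closure I would produce a counterexample inside the category of Lie algebras, which embed in Leibniz algebras and on which the left and right quasi-Noetherian conditions coincide (by antisymmetry $[\Lieg^{(m)},\bigcup_n I_n]$ and $[\bigcup_n I_n,\Lieg^{(m)}]$ are the same subspace). Fix a finite-dimensional simple Lie algebra $\Lies$ (for instance $\mathfrak{sl}_2$ over a field of characteristic zero) and let $V = \bigoplus_{i \in \mathbb{N}} V_i$ be a countable direct sum of nontrivial finite-dimensional irreducible $\Lies$-modules. Form the semidirect product $\Lieg = V \rtimes \Lies$, so that $V$ is an abelian two-sided ideal and $\Lieg/V \cong \Lies$. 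Then $V$ is solvable and $\Lieg/V$ is finite-dimensional, hence both are quasi-Noetherian by Remark \ref{4.2}. Since each $V_i$ is a nontrivial irreducible, $\Lies \cdot V_i = V_i$, whence $[\Lieg,\Lieg] = \Lies\cdot V + [\Lies,\Lies] = V + \Lies = \Lieg$, so $\Lieg$ is perfect and $\Lieg^{(m)} = \Lieg$ for every $m$.

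Finally I would exhibit the offending chain: each finite partial sum $I_n := V_0 \oplus \cdots \oplus V_n$ is an $\Lies$-submodule, hence a two-sided ideal of $\Lieg$, and $\bigcup_n I_n = V$. For every $m$ one has $[\Lieg^{(m)}, \bigcup_n I_n] = [\Lieg, V] = \Lies\cdot V = V$, which is not contained in any proper $I_m$; thus $\Lieg$ is not left quasi-Noetherian, and being a Lie algebra it is not quasi-Noetherian, so $q\max$-$\lhd$ is not $E$-closed. I expect the only delicate point to be the bookkeeping in the $Q$-closure step — checking that the derived series passes to the quotient and that the lifted chain genuinely consists of ideals — while the counterexample is routine once one insists that $\Lieg$ be perfect (so that $\Lieg^{(m)}$ never shrinks) and that $V$ split into infinitely many nontrivial pieces (so that no single $I_m$ can absorb $[\Lieg,V]=V$).
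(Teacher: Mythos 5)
Your proposal is correct. The $Q$-closure half is essentially the paper's own argument: both lift the ascending chain of ideals of the quotient to an ascending chain in $\Lieg$, apply quasi-Noetherianity there, and push the resulting containment $[\Lieg^{(m)},\bigcup_n I_n]\subseteq I_m$ back down through the projection using $(\Lieg/\eh)^{(m)}=\pi(\Lieg^{(m)})$; the bookkeeping you flag (ideals lift to ideals containing $\eh$, brackets of subspaces commute with $\pi$) is exactly what the paper's displayed inclusions encode.

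Where you genuinely diverge is the failure of $E$-closure. The paper disposes of this half by citation only — it asserts that the statement already fails for Lie algebras and points to counterexamples in Aldosray--Stewart — whereas you construct one explicitly: $\Lieg = V\rtimes\Lies$ with $\Lies$ finite-dimensional simple and $V=\bigoplus_i V_i$ an infinite sum of nontrivial irreducibles. Your construction checks out: $V$ is abelian hence quasi-Noetherian by Remark \ref{4.2}, $\Lieg/V\cong\Lies$ is finite-dimensional hence Noetherian hence quasi-Noetherian, perfection of $\Lieg$ forces $\Lieg^{(m)}=\Lieg$ for all $m$, and the chain of partial sums $I_n=V_0\oplus\cdots\oplus V_n$ has $[\Lieg^{(m)},\bigcup_n I_n]=\Lies\cdot V=V\not\subseteq I_m$ for every $m$, so $\Lieg\notin q\max$-$\lhd$ while lying in $E(q\max$-$\lhd)$. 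Since a Lie algebra is a Leibniz algebra and antisymmetry identifies the left and right conditions, this settles the Leibniz case. Your version buys self-containedness at the cost of a page of verification; the paper's buys brevity at the cost of sending the reader to a preprint. You have correctly isolated the two design constraints that make any such example work — perfection (so the derived series never shrinks) and an ideal that is a strictly increasing union of subideals not absorbed at any finite stage.
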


\begin{proof}
First, we show that  the class $ q \max $-$\lhd  $  is $ Q$-closed. Indeed, let $\Lieg$ be a quasi-Noetherian Leibniz algebra and $I$ an ideal of $\Lieg.$
         We show that the quotient Leibniz algebra  $\frac{\Lieg}{I}$ is also quasi-Noetherian.
        Consider an arbitrary ascending chain of ideals of  $\frac{\Lieg}{I}:$ $\frac{\mathcal{J}_{0}}{I}
        \subseteq \frac{\mathcal{J}_{1}}{I} \subseteq \frac{\mathcal{J}_{2}}{I} \subseteq \cdots$,
        from which is obtained the following ascending sequence of ideals of $\Lieg:$ $\mathcal{J}_{0}
        \subseteq \mathcal{J}_{1} \subseteq \mathcal{J}_{2}\subseteq \cdots.$
        Now since $\Lieg$ is a quasi-Noetherian Leibniz algebra, there exists $m\in \mathbb{N}$ such
         that $[\Lieg^{(m)},\underset{s \in \mathbb{N}}{\cup}\mathcal{J}_{s}] \subseteq \mathcal{J}_{m}$
         and $[\underset{s \in \mathbb{N}}{\cup}\mathcal{J}_{s},\Lieg^{(m)}] \subseteq \mathcal{J}_{m}.$ So there exists
         $m\in \mathbb{N}$ such that  $[\underset{s \in \mathbb{N}}{\cup}\frac{\mathcal{J}_{s}}{I}, (\frac{\Lieg}{I})^{(m)}] =
         [\frac{\underset{s \in \mathbb{N}}{\cup}\mathcal{J}_{s}}{I}, (\frac{\Lieg}{I})^{(m)}] \subseteq \frac{[\underset{s
         \in \mathbb{N}}{\cup}\mathcal{J}_{s}, \Lieg^{(m)}]}{I} \subseteq \frac{\mathcal{J}_{m}}{I}$ and $[(\frac{\Lieg}{I})^{(m)},
          \underset{s \in \mathbb{N}}{\cup}\frac{\mathcal{J}_{s}}{I}] = [(\frac{\Lieg}{I})^{(m)}, \frac{\underset{s \in
          \mathbb{N}}{\cup}\mathcal{J}_{s}}{I}] \subseteq \frac{[\Lieg^{(m)}, \underset{s \in \mathbb{N}}{\cup}\mathcal{J}_{s}]}{I}
          \subseteq \frac{\mathcal{J}_{m}}{I}.$  Hence $\Lieg/I \in q \max$-$\lhd$ and therefore the class $ q \max $-$\lhd  $ is $ Q$-closed.\\
Now, the class $ q \max $-$\lhd  $  is not $ E$-closed since this statement fails for Lie algebras. Counterexamples are provided in Section 2 of  \cite{ASN}.
\end{proof}

\begin{Rem}
 A finite direct sum of quasi-Noetherian Leibniz algebras is quasi-Noetherian.
\end{Rem}
\begin{proof}
    Let $\Lieg_{1},\Lieg_{2} \in q \max$-$\lhd$ and let
        $J_{0} \subseteq J_{1} \subseteq J_{3} \subseteq \cdots, $
 be an ascending chain of ideals of $\Lieg_{1} \oplus \Lieg_{2}.$ Observe that for all $r\geq 0,$
    there exists $J_{r}^{1}$ and $J_{r}^{2}$ ideals of $\Lieg_{1}$ and $\Lieg_{2}$ respectively such that $J_{r}= J_{r}^{1} \oplus J_{r}^{2}$.
   Since $\Lieg_{1}$ and $\Lieg_{2}$ are  quasi-Noetherian Leibniz algebras,
   there exists $k_{1},k_{2} \in \mathbb{N}$ such that
    \begin{center}
        $[(\Lieg_{1})^{(k_{1})},\underset{r \in \mathbb{N}}{\bigcup}J^{1}_{r}] \subseteq J^{1}_{k_{1}}$ and
        $[\underset{r \in \mathbb{N}}{\bigcup}J^{2}_{r},(\Lieg_{2})^{(k_{2})}] \subseteq J^{2}_{k_{2}}$.
    \end{center}
     Taking $k= \max(k_{1},k_{2})$, we have  $\Lieg^{(k)} \subset
     \Lieg^{(k_{1})}$ and $\Lieg^{(k)} \subset
     \Lieg^{(k_{2})}.$ Since $((\Lieg_{1}) \oplus (\Lieg_{2}))^{(k)}= (\Lieg_{1})^{(k)}
     \oplus (\Lieg_{2})^{(k)}$ by Lemma \ref{2.1}, we deduce the following:

    \begin{equation*}
    \begin{aligned}
     \left[((\Lieg_{1}) \oplus (\Lieg_{2}))^{(k)},\underset{r \in \mathbb{N}}{\bigcup} J_{r}\right]  &
     = \left[((\Lieg_{1}) \oplus (\Lieg_{2}))^{(k)},
     \underset{r \in \mathbb{N}}{\bigcup}(J^{1}_{r} \oplus
    J^{2}_{r})\right]\\
    & = \left[(\Lieg_{1})^{(k)} \oplus (\Lieg_{2})^{(k)},\underset{r \in \mathbb{N}}{\bigcup}J^{1}_{r} \oplus \underset{r \in \mathbb{N}}{\bigcup}J^{2}_{r}\right]\\
    & = \left[(\Lieg_{1})^{(k)},\underset{r \in \mathbb{N}}{\bigcup}J^{1}_{r}\right] \oplus \left[(\Lieg_{2})^{(k)},\underset{r \in \mathbb{N}}{\bigcup}J^{2}_{r}\right]\\
    & \subseteq \left[(\Lieg_{1})^{(k_{1})},\underset{r \in \mathbb{N}}{\bigcup}J^{1}_{r}\right] \oplus \left[(\Lieg_{1})^{(k_{2})},\underset{r \in \mathbb{N}}{\bigcup}J^{2}_{r}\right]\\
    & \subseteq J^{1}_{k_{1}} \oplus J^{2}_{k_{2}}\\
    & \subseteq J^{1}_{k} \oplus J^{2}_{k} = J_{k}.
    \end{aligned}
    \end{equation*}
    That is, $(\Lieg_{1}) \oplus (\Lieg_{2})$ is a left quasi-Noetherian
    Leibniz algebra.
    In a similar way one can show that $(\Lieg_{1}) \oplus (\Lieg_{2})$ is a right quasi-Noetherian
    Leibniz algebra.
    Thus, $\Lieg_{1} \oplus \Lieg_{2} \in q \max$-$\lhd$.
\end{proof}

\subsection{Examples of quasi-Noetherian Leibniz Algebras}
 The following Lemma is useful
in verifying the Leibniz identity  using a minimum
number of steps.
\begin{Le}\cite[Lemma 1]{C. C.}\label{1.7}
    Let $\Lieg$ be a $\mathbb{K}$-vector spaces endowed with a bilinear map $[-,-],$ call it product.
     Assume that the subspace generated by $[x,x]$ cancels the product to the right.
     In such $\mathbb{K}$-vector spaces, the Leibniz identity is true for the triple $(x,y,z)$ if and only if it is for $(x,z,y)$.
\end{Le}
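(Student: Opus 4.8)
The plan is to phrase the left Leibniz identity as the vanishing of the trilinear defect
\[
D(a,b,c):=[a,[b,c]]-[[a,b],c]-[b,[a,c]],
\]
so that ``the Leibniz identity is true for the triple $(a,b,c)$'' means exactly $D(a,b,c)=0$, and then to show that $D(x,y,z)$ and $D(x,z,y)$ coincide up to an element that the hypothesis kills. First I would restate the standing assumption in usable form. Write $S$ for the subspace generated by the squares $[a,a]$; the identity $[a,b]+[b,a]=[a+b,a+b]-[a,a]-[b,b]$ shows that $S$ is also the span of all symmetric combinations $[a,b]+[b,a]$. The assumption that $S$ cancels the product to the right then reads $[s,g]=0$ for every $s\in S$ and every $g\in\Lieg$, equivalently $[[a,b]+[b,a],g]=0$ for all $a,b,g\in\Lieg$.

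The computational heart is to add the two defects and use bilinearity together with the assumption. Forming $D(x,y,z)+D(x,z,y)$, the summands in which the swapped arguments occur inside a common inner bracket, together with the terms carrying the outer left multiplication, are designed to cancel in pairs; what should remain are brackets of the shape $[\,[a,b]+[b,a],\,c\,]$, whose left factor is a symmetric combination and hence lies in $S$. Applying the reformulated hypothesis annihilates every such surviving term, so I expect to reach $D(x,y,z)+D(x,z,y)=0$, that is $D(x,z,y)=-D(x,y,z)$. From this single identity the conclusion is immediate: $D(x,y,z)=0$ if and only if $D(x,z,y)=0$, which is the asserted equivalence of the Leibniz identity on $(x,y,z)$ and on $(x,z,y)$.

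The main obstacle is exactly the bookkeeping in that middle step: because the hypothesis produces cancellation only when a square-type element sits as the \emph{left} factor of a bracket, I must regroup the bracket terms so that every term which does not cancel outright is genuinely of the form $[s,g]$ with $s\in S$, rather than a bare element of $S$ standing alone. Peeling the symmetric part $[u,v]+[v,u]$ off each antisymmetric difference $[u,v]-[v,u]$ is the device that makes the span of squares appear in the correct position, and checking that no purely antisymmetric (Liezation-level) contribution survives is the delicate point; this is precisely where the assumption that squares cancel the product to the right is indispensable, and it is what distinguishes the Leibniz setting from a general non-associative product. The lemma then functions as a labour-saving device, reducing the verification of the identity to triples up to transposing the relevant pair of entries.
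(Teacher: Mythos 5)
The paper gives no proof of this lemma---it is quoted from Cuvier---so your argument can only be judged on its own terms. Your overall strategy (encode the identity as the vanishing of a defect $D$, add $D(x,y,z)$ and $D(x,z,y)$, cancel in pairs, and kill the leftover symmetric terms with the hypothesis) is the right one, and it is essentially Cuvier's. But the middle step, which you yourself flag as the main obstacle, genuinely fails in the form you set it up. With the left defect $D(a,b,c)=[a,[b,c]]-[[a,b],c]-[b,[a,c]]$ one computes
\[
D(x,y,z)+D(x,z,y)=[x,s_1]-s_2-s_3,
\]
where $s_1=[y,z]+[z,y]$, $s_2=[[x,y],z]+[z,[x,y]]$ and $s_3=[[x,z],y]+[y,[x,z]]$ all lie in $S$. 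No term here has an element of $S$ as its \emph{left} factor, so your reading of the hypothesis ($[s,g]=0$) kills nothing; under the opposite reading ($[g,s]=0$) only $[x,s_1]$ dies, leaving the bare elements $s_2,s_3$ of $S$, about which the hypothesis says nothing. This is not a bookkeeping problem but a genuine failure: on the four-dimensional algebra with $[e_1,e_2]=e_3$, $[e_2,e_3]=e_4$ and all other basis products zero, one has $S=\langle e_3,e_4\rangle$ and $[s,g]=0$ for every $s\in S$, yet $D(e_2,e_2,e_1)=0$ while $D(e_2,e_1,e_2)=e_4\neq 0$; so $D(x,z,y)=-D(x,y,z)$ is simply false in your setting.

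The cancellation you want does occur, but for the \emph{right} Leibniz defect $D(a,b,c)=[[a,b],c]-[[a,c],b]-[a,[b,c]]$, which is the convention of Cuvier's Lemma~1: there the double-bracket terms of $D(x,y,z)$ and $D(x,z,y)$ cancel pairwise and the sum collapses to $-[x,[y,z]+[z,y]]=-[x,s_1]$, which vanishes because ``the squares cancel the product to the right'' means precisely $[g,s]=0$ for $s\in S$, i.e.\ the squares annihilate any product in which they stand as the right-hand factor. For the left convention used elsewhere in this paper, the correct analogue transposes the \emph{first} two arguments instead: $D(x,y,z)+D(y,x,z)=-[[x,y]+[y,x],z]$, which is killed by $[s,g]=0$. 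So to repair your proof you must either switch to the right-Leibniz defect (matching the transposition $(x,y,z)\mapsto(x,z,y)$ in the quoted statement) or change which pair of arguments is transposed; as written, the transposition, the form of the identity, and the slot in which the hypothesis acts are mutually mismatched.
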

The following is an example of a quasi-Noetherian Leibniz algebra that is also Noetherian.
\begin{Ex}

\item Let $\Lieg$ be a $\mathbb{K}$-vector space spanned by
$<\{e_{1}, e_{2}, e_{3}, e_{4}, e_{5}, e_{6}\}>.$ Define the bracket
$\Lieg\otimes \Lieg\stackrel{[-, -]}\longrightarrow \Lieg$ as
follows $[e_{2}, e_{2}]= e_{1}, [e_{3}, e_{3}]= e_{5}, [e_{3},
e_{4}]= e_{6}, [e_{4}, e_{3}]= e_{5}, [e_{5}, e_{3}]= e_{6}.$
Using Lemma \ref{1.7}, one can verify that the above bracket satisfies the
Leibniz identity. Since $\Lieg$ is  a finite dimensional Leibniz algebra,
 it is  Noetherian, and thus quasi-Noetherian. Alternatively, set $I$ and $J$ be the subspaces of
$\Lieg,$ respectively spanned by $\{e_{1}, e_{2}\}$ and $\{
e_{3}, e_{4}, e_{5}, e_{6}\}$. Then $I$ and $J$ are two-sided ideals of
$\Lieg.$ Moreover, $I$ is a simple ideal of $\Lieg$ and
$\frac{\Lieg}{I} \cong J$ is a nilpotent Leibniz algebra. Let c be
the class of the nilpotency of $J.$ Since $\gamma_{c}(J)\subseteq
Z^{l}(J)\cap Z^{r}(J)= Z(J)$ (see \cite{D. K. M.}, proposition 4.2
and corollary 4.3), it follows that
 $J$ is a nilpotent (and thus solvable) Leibniz algebra. So by Corollary \ref{4.6} above
 we obtain that $\Lieg$ is a quasi-Noetherian Leibniz algebra.

\end{Ex}

Hereafter we provide an example of a quasi-Noetherian Leibniz algebra that is not Noetherian.

\begin{Ex} Consider the vector space
$\Lieg$ be spanned by $\{e_{1},e_{2}, \cdots\}$ and define on
$\Lieg$ the bracket as follows: $[e_{2},e_{2}]=e_1$ and
$[e_{i},e_{3}]= e_{i+1},$ $i\geq 4$. Again, using Lemma \ref{1.7}, one can verify that the above bracket satisfies the
Leibniz identity, and thus $\Lieg$ is a Leibniz algebra. Next let $I$ and $J$ be the subspaces of $\Lieg$ spanned by $\{e_{1},e_{2}\}$ and
$\{e_{3},e_{4},\cdots\}$ respectively. It is obvious that $I$ and
$J$ are two sided ideals of $\Lieg$. Furthermore $I$ is a simple
ideal while $J$ is a solvable ideal of $\Lieg$. Now since
$\frac{\Lieg}{I}\cong J,$ it follows by corollary \ref{4.6} that $\Lieg$
is a quasi-Noetherian Leibniz algebra.

\end{Ex}

\section{Maximal Condition For Abelian Ideals}
Recall that $\max $-$\lhd \mathfrak{U} $ denotes the class of all
Leibniz algebras satisfying the maximal condition for abelian
ideals; $\mathfrak{U}^{k}$ denotes the class of solvable
Leibniz algebras of derived length $\leq k$ and $\max $-$\lhd
\mathfrak{U}^{k}$ denotes the class of Leibniz algebras
satisfying the maximal condition for $\mathfrak{U}^{k}$ ideals
of $\Lieg.$ Thus $\Lieg \in \max $-$\lhd \mathfrak{U}^{k}$ means
that any ascending chain of $\mathfrak{U}^{k}$ ideals of $\Lieg$ terminates.
In this section, we aim
to relate the class $\max $-$\lhd \mathfrak{U}$ to the classes $ q \max $-$\lhd$ and $\max $-$\lhd$.\\
\indent For any class $ \mathfrak{X} $, set $\mathfrak{X}^{Q} =
\{\Lieg \in \mathfrak{X}: \Lieg/I \in \mathfrak{X}~ \text{for all }~ I \lhd \Lieg\}.$
Notice that $ \mathfrak{X}^{Q}\subseteq \mathfrak{X}.$ For instance, $(Leib)^{Q}=Leib.$\\
\indent The following result leads to a
characterization of Noetherian Leibniz algebras.

\begin{Le}\label{5.1}  
    For any $ k \geq 1, $
\begin{center}
    $(\max $-$\lhd \mathfrak{U})^{Q} = (\max $-$\lhd \mathfrak{U}^{k})^{Q}.$
\end{center}
\end{Le}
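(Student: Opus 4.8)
The plan is to establish the asserted equality by proving the two inclusions separately; the inclusion $(\max$-$\lhd \mathfrak{U}^{k})^{Q} \subseteq (\max $-$\lhd \mathfrak{U})^{Q}$ is formal, while the reverse inclusion is the substantive one and will go by induction on $k$. For the easy inclusion I would first observe that $\mathfrak{U}=\mathfrak{U}^{1}\subseteq \mathfrak{U}^{k}$ as classes, since every abelian ideal is solvable of derived length $\leq k$; consequently any algebra satisfying the ascending chain condition on $\mathfrak{U}^{k}$-ideals a fortiori satisfies it on abelian ideals, so $\max $-$\lhd \mathfrak{U}^{k}\subseteq \max $-$\lhd \mathfrak{U}$. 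Since $(-)^{Q}$ is monotone (if $\mathfrak{X}\subseteq\mathfrak{Y}$ then $\mathfrak{X}^{Q}\subseteq\mathfrak{Y}^{Q}$, immediately from the definition), this yields $(\max $-$\lhd \mathfrak{U}^{k})^{Q}\subseteq(\max $-$\lhd \mathfrak{U})^{Q}$.

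Before the main inclusion I would record two structural facts. First, $\mathfrak{X}^{Q}$ is itself $Q$-closed: if $\Lieg\in\mathfrak{X}^{Q}$ and $I\lhd\Lieg$, then $\Lieg/I\in\mathfrak{X}$ and every further quotient $(\Lieg/I)/(J/I)\cong\Lieg/J$ lies in $\mathfrak{X}$. This means it suffices to prove the pointwise statement $(\max $-$\lhd \mathfrak{U})^{Q}\subseteq \max $-$\lhd \mathfrak{U}^{k}$ and then propagate it along quotients. Second, if $A\lhd\Lieg$ then its derived ideal $A^{(1)}=[A,A]$ is again a two-sided ideal of $\Lieg$; this is the one computation I would carry out, using the left Leibniz identity $[g,[a,b]]=[[g,a],b]+[a,[g,b]]\in[A,A]$ for the left side and its rearrangement $[[a,b],g]=[a,[b,g]]-[b,[a,g]]\in[A,A]$ for the right side, with $a,b\in A$ and $g\in\Lieg$.

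The heart of the argument is then the induction proving $(\max $-$\lhd \mathfrak{U})^{Q}\subseteq \max $-$\lhd \mathfrak{U}^{k}$. The base case $k=1$ is immediate because $\mathfrak{U}^{1}=\mathfrak{U}$ and $\mathfrak{X}^{Q}\subseteq\mathfrak{X}$. For the inductive step I would take $\Lieg\in(\max $-$\lhd \mathfrak{U})^{Q}$ and an ascending chain $A_{0}\subseteq A_{1}\subseteq\cdots$ of $\mathfrak{U}^{k}$-ideals, and pass to the derived ideals $A_{i}^{(1)}=[A_{i},A_{i}]$. By the computation above these are two-sided ideals of $\Lieg$, they form an ascending chain, and by Lemma \ref{2.1} they satisfy $(A_{i}^{(1)})^{(k-1)}=A_{i}^{(k)}=0$, so they are $\mathfrak{U}^{k-1}$-ideals. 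The inductive hypothesis (which applies since $\Lieg\in(\max $-$\lhd \mathfrak{U})^{Q}$ forces $\Lieg\in\max $-$\lhd \mathfrak{U}^{k-1}$) stabilizes this chain at some common value $B=A_{N}^{(1)}$. Now I would work in $\Lieg/B$: for $i\geq N$ the images $A_{i}/B$ are two-sided ideals whose derived ideal is $(A_{i}^{(1)}+B)/B=0$, hence abelian, and $\Lieg/B\in\max $-$\lhd \mathfrak{U}$ precisely because $\Lieg\in(\max $-$\lhd \mathfrak{U})^{Q}$. The maximal condition on abelian ideals in $\Lieg/B$ then stabilizes the chain $\{A_{i}/B\}_{i\geq N}$, and since all $A_{i}$ contain $B$ the original chain stabilizes, giving $\Lieg\in\max $-$\lhd \mathfrak{U}^{k}$. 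Finally, $Q$-closure of $(\max $-$\lhd \mathfrak{U})^{Q}$ upgrades this pointwise containment to $(\max $-$\lhd \mathfrak{U})^{Q}\subseteq(\max $-$\lhd \mathfrak{U}^{k})^{Q}$, completing the equality.

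The main obstacle I anticipate is the quotient step of the induction: the crucial idea is that stabilizing the derived ideals first (via the inductive hypothesis) renders the truncated chain abelian modulo $B$, which is exactly what lets the hypothesis on abelian ideals be deployed in $\Lieg/B$. This is also the precise point where the full $Q$-closed hypothesis is indispensable — mere ascending chain condition on abelian ideals of $\Lieg$ itself would not suffice, since the decisive chain lives in the quotient $\Lieg/B$ rather than in $\Lieg$. Everything else reduces to the single bracket computation showing $[A,A]\lhd\Lieg$ and routine bookkeeping with derived lengths via Lemma \ref{2.1}.
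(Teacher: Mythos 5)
Your proof is correct, and while it shares the paper's overall skeleton --- the easy inclusion coming from $\mathfrak{U}\subseteq\mathfrak{U}^{k}$, then induction on $k$ via derived ideals of the chain and a passage to a quotient --- the inductive step is arranged as a mirror image of the paper's. The paper, at stage $k+1$, extracts the $k$-th derived ideals $(\mathcal{J}_r)^{(k)}$, which are \emph{abelian}, stabilizes that chain using $\max$-$\lhd\,\mathfrak{U}$ in $\Lieg$ itself, and only then invokes the inductive hypothesis on the resulting $\mathfrak{U}^{k}$-chain in the quotient $\Lieg/(\mathcal{J}_{m_0})^{(k)}$. You instead extract the \emph{first} derived ideals $A_i^{(1)}$, which are $\mathfrak{U}^{k-1}$-ideals, stabilize them by the inductive hypothesis applied in $\Lieg$, and then deploy the abelian maximal condition in the quotient $\Lieg/B$; in effect you factor $\mathfrak{U}^{k}$ as $\mathfrak{U}^{k-1}$-by-$\mathfrak{U}$ where the paper factors $\mathfrak{U}^{k+1}$ as $\mathfrak{U}$-by-$\mathfrak{U}^{k}$. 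Both decompositions work equally well, but your version buys two genuine improvements in rigor. First, your direct Leibniz-identity computation that $[A,A]\lhd\Lieg$ whenever $A\lhd\Lieg$ (iterated to all $A^{(j)}$) replaces the paper's appeal to Remark \ref{3.4} together with an unjustified side claim that right multiplication operators annihilate $(\mathcal{J}_r)^{(k)}$. Second, your observation that $\mathfrak{X}^{Q}$ is itself $Q$-closed is exactly what is needed to upgrade the pointwise statement $(\max$-$\lhd\,\mathfrak{U})^{Q}\subseteq\max$-$\lhd\,\mathfrak{U}^{k}$ to the asserted containment of $Q$-classes; the paper leaves this final step implicit, concluding membership in $(\max$-$\lhd\,\mathfrak{U}^{k})^{Q}$ after verifying the maximal condition only in $\Lieg$ itself.
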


\begin{proof}
Let $\Lieg$ be a Leibniz algebra in $ (\max $-$\lhd
\mathfrak{U}^{k})^{Q}$ and consider an arbitrary ascending
chain of
$\mathfrak{U}$ ideals of the quotient Leibniz algebra $\frac{\Lieg}{I}$: \\
\begin{equation*}
    \mathcal{K}_{0} \subseteq \mathcal{K}_{1} \subseteq \cdots \subseteq \mathcal{K}_{r} \subseteq
    \cdots,
\end{equation*}
for a given ideal $I$ of $\Lieg.$ Set $\mathcal{K} = \underset{i \in \mathbb{N}}{\cup} \mathcal{K}_{i}$. \\
 Since for all $i \in
\mathbb{N},$ $\mathcal{K}_{i}$ is an abelian ideal,
$\mathcal{K}^{(1)}=[\mathcal{K},\mathcal{K}]=0$, and
thus $\mathcal{K}^{(k)}=0.$ So for all $i \in \mathbb{N},$
$\mathcal{K}_{i}$ is also a $\mathfrak{U}^{k}$ ideal of
$\frac{\Lieg}{I}$. But $\Lieg \in \max $-$\lhd \mathfrak{U}^{k}$
by hypothesis and $ \mathcal{K}_{0} \subseteq \mathcal{K}_{1}
\subseteq \cdots \subseteq \mathcal{K}_{r} \subseteq \cdots $ is
also an ascending chain of $\mathfrak{U}^{k}$ ideals of
$\frac{\Lieg}{I},$ hence it terminates. Therefore $\Lieg \in (\max
$-$\lhd \mathfrak{U})^{Q}.$ This proves that $(\max $-$\lhd
\mathfrak{U}^{k})^{Q} \subseteq
(\max $-$\lhd \mathfrak{U})^{Q}$.\\
Now, to prove that $(\max $-$\lhd \mathfrak{U})^{Q}
\subseteq (\max $-$\lhd \mathfrak{U}^{k})^{Q},$
we proceed by induction on $k$.
For $k=1$, we have
    $(\max $-$\lhd \mathfrak{U})^{Q} = (\max$-$\lhd \mathfrak{U}^{k})^{Q}.$\\
Hereafter, we assume this inclusion holds for $0 \leq s\leq k$, that
is $(\max $-$\lhd \mathfrak{U})^{Q} \subseteq (\max $-$\lhd
\mathfrak{U}^{s})^{Q},$ $0 \leq s\leq k.$ We show that
$(\max$-$\lhd \mathfrak{U})^{Q}
\subseteq (\max $-$\lhd \mathfrak{U}^{k+1})^{Q}.$\\
Let $\Lieg \in (\max$-$\lhd \mathfrak{U})^{Q}$ and consider
\begin{center}
    $\mathcal{J}_{0} \subseteq \mathcal{J}_{1} \subseteq \mathcal{J}_{2}
    \subseteq \cdots,$ \qquad\qquad\qquad $(1)$
\end{center}
an ascending chain of $\mathfrak{U}^{k+1}$ ideals of $\Lieg$.
 Observe that for all $r \in \mathbb{N}$,
$(\mathcal{J}_{r})^{(k)} \mbox{ch} \mathcal{J}_{r} \lhd \Lieg$ and
the right multiplication operator $R_a, a \in \Lieg,$ restricted to
$\mathcal{J}_{r}$ maps $(\mathcal{J}_{r})^{(k)}$ to the zero vector.
So by Remark \ref{3.4}, we obtain that
$(\mathcal{J}_{r})^{(k)}$ is an ideal of $\Lieg.$
Moreover since $\mathcal{J}_{r} \in \mathfrak{U}^{k+1},$ one
 verifies by direct calculation that
$(\mathcal{J}_{r})^{(k)}$ is an abelian ideal of $\Lieg$ for all
$k\geq 1.$ Therefore
$$(\mathcal{J}_{0})^{(k)} \subseteq
(\mathcal{J}_{1})^{(k)} \subseteq
(\mathcal{J}_{2})^{(k)} \subseteq \cdots$$ is an ascending
chain of abelian ideals of $\Lieg$ and $\Lieg \in \max$-$\lhd
\mathfrak{U}.$ So this chain terminates. Consequently there exists
$m_{0} \in \mathbb{N}$ such that
\begin{center}
    $(\mathcal{J}_{r})^{(k)}= (\mathcal{J}_{m_{0}})^{(k)}$ for all $r \geq m_{0}$.
\end{center}
Now, consider the subchain of ideals of the initial chain (1),
$$\mathcal{J}_{m_0} \subseteq
\mathcal{J}_{m_0 + 1} \subseteq \mathcal{J}_{m_0 + 2} \subseteq
\cdots.$$ The following induced ascending chain
\begin{center}
$\frac{\mathcal{J}_{m_0}}{(\mathcal{J}_{m_{0}})^{(k)}}
\subseteq \frac{\mathcal{J}_{m_0 +
1}}{(\mathcal{J}_{m_{0}})^{(k)}} \subseteq
\frac{\mathcal{J}_{m_0 + 2}}{(\mathcal{J}_{m_{0}})^{(k)}}
\subseteq \cdots $
\end{center}
is an ascending chain of $\mathfrak{U}^{k}$ ideals of
$\frac{\Lieg}{(\mathcal{J}_{m_{0}})^{(k)}} $. Since by
induction hypothesis, $\Lieg \in (\max$-$\lhd \mathfrak{U})^{Q}
\subseteq (\max$-$\lhd \mathfrak{U}^{k})^{Q},$ this chain
terminates. So there exists $t_{0} \geq m_0 $ such that
\begin{center}
$\frac{\mathcal{J}_{t}}{(\mathcal{J}_{m_{0}})^{(k)}}=\frac{\mathcal{J}_{t_{0}}}{(\mathcal{J}_{m_{0}})^{(k)}}$
for all $t \geq t_{0} \geq m_0.$
\end{center}
Thus $\mathcal{J}_{t} = \mathcal{J}_{t_{0}},$ for all $t \geq
t_{0}$. This implies that the initial chain (1):
\begin{center}
    $\mathcal{J}_{0} \subseteq \mathcal{J}_{1} \subseteq \mathcal{J}_{2} \subseteq \cdots \subseteq \mathcal{J}_{m_{0}} \subseteq
     \mathcal{J}_{m_{0}+1} \subseteq \cdots \subseteq \mathcal{J}_{t}=\mathcal{J}_{t_{0}}=\mathcal{J}_{t_{0}+1}= \cdots$
\end{center}
terminates and therefore $(\max$-$\lhd \mathfrak{U})^{Q} \subseteq
(\max$-$\lhd \mathfrak{U}^{k})^{Q}$ for all $k\in \mathbb{N}.$
\end{proof}

\indent The following is a characterization of
Noetherian Leibniz algebras in terms of two subclasses of
Leibniz algebras, namely $ q\max$-$\lhd$ and $ (\max $-$\lhd
\mathfrak{U})^{Q}. $
\begin{Th} \label{5.2} A quasi-Noetherian Leibniz algebra is a Noetherian Leibniz algebra if and only if every quotient
algebra satisfies the maximal condition for abelian ideals. Symbolically,
\begin{center}
$\max$-$ \lhd = (\max$-$\lhd \mathfrak{U})^{Q} \cap (q \max $-$ \lhd).$\end{center}
\end{Th}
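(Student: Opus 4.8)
The plan is to prove the set equality by showing the two inclusions $\max\text{-}\lhd \subseteq (\max\text{-}\lhd\,\mathfrak{U})^{Q}\cap(q\max\text{-}\lhd)$ and its reverse. The forward inclusion should be the easy direction: if $\Lieg$ is Noetherian, then every ascending chain of two-sided ideals terminates, so in particular every chain of abelian ideals of $\Lieg$ (and of any quotient $\Lieg/I$, since Noetherianity is $Q$-closed by the standard correspondence of ideals) terminates, placing $\Lieg$ in $(\max\text{-}\lhd\,\mathfrak{U})^{Q}$; and by Remark~\ref{4.2}(1) every Noetherian Leibniz algebra is quasi-Noetherian, so $\Lieg\in q\max\text{-}\lhd$ as well.

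The substance is the reverse inclusion: assume $\Lieg\in(\max\text{-}\lhd\,\mathfrak{U})^{Q}\cap(q\max\text{-}\lhd)$ and deduce that every ascending chain of two-sided ideals $\mathcal{J}_{0}\subseteq\mathcal{J}_{1}\subseteq\cdots$ terminates. The key device is to invoke Lemma~\ref{5.1}, which upgrades membership in $(\max\text{-}\lhd\,\mathfrak{U})^{Q}$ to membership in $(\max\text{-}\lhd\,\mathfrak{U}^{k})^{Q}$ for every $k\geq1$; this lets me control chains of solvable ideals of arbitrary fixed derived length, not merely abelian ones. The strategy is then to combine the quasi-Noetherian hypothesis, which controls the chain modulo a high derived power $\Lieg^{(m)}$ of the ambient algebra, with the maximal condition to control the remaining part. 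First I would use that $\Lieg\in q\max\text{-}\lhd$ to produce an $m$ with $[\Lieg^{(m)},\bigcup_{n}\mathcal{J}_{n}]\subseteq\mathcal{J}_{m}$ and the symmetric right-hand containment. The idea is that the obstruction to the chain stabilizing lives in the images $\mathcal{J}_{n}/(\mathcal{J}_{n}\cap\text{something})$, which behave like solvable ideals of bounded derived length in a suitable quotient and hence stabilize by Lemma~\ref{5.1}.

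I expect the main obstacle to be bridging the quasi-Noetherian data, which is phrased in terms of brackets $[\Lieg^{(m)},-]$ rather than the chain terminating outright, with the maximal-condition data, which directly yields termination for $\mathfrak{U}^{k}$-ideals. Concretely, from $q\max\text{-}\lhd$ one only learns that the chain is "eventually absorbed" by $\Lieg^{(m)}$-brackets; one must argue that passing to the quotient $\Lieg/\Lieg^{(m)}$ (which is solvable of some derived length, and whose ideals therefore fall under $\max\text{-}\lhd\,\mathfrak{U}^{k}$ via Lemma~\ref{5.1}) forces the images $\overline{\mathcal{J}_{n}}$ to stabilize, and then lift that stabilization back to the $\mathcal{J}_{n}$ themselves using the bracket containment $[\Lieg^{(m)},\bigcup_{n}\mathcal{J}_{n}]\subseteq\mathcal{J}_{m}$. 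The delicate point is the lifting step: two consecutive ideals $\mathcal{J}_{t}\subseteq\mathcal{J}_{t+1}$ with equal images in $\Lieg/\Lieg^{(m)}$ differ only inside $\Lieg^{(m)}$, and I would need the absorption condition together with a descending/ascending interplay (mirroring the subchain argument at the end of Lemma~\ref{5.1}, where one quotients by $(\mathcal{J}_{m_{0}})^{(k)}$) to conclude $\mathcal{J}_{t}=\mathcal{J}_{t+1}$. Once both the quasi-Noetherian absorption and the maximal condition are reconciled in this way, termination of the arbitrary chain follows and the equality of classes is established.
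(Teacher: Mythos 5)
Your forward inclusion is fine and matches the paper. The reverse inclusion, however, has a genuine gap precisely at the step you yourself flag as ``delicate,'' and the gap is not cosmetic. You propose to pass to $\Lieg/\Lieg^{(m)}$, stabilize the images $\overline{\mathcal{J}_n}$ there, and then lift. But stabilization of $\mathcal{J}_n+\Lieg^{(m)}$ alone cannot give $\mathcal{J}_t=\mathcal{J}_{t+1}$: two consecutive terms can differ by elements of $\Lieg^{(m)}$, and the absorption condition $[\Lieg^{(m)},\bigcup_n\mathcal{J}_n]\subseteq\mathcal{J}_m$ constrains brackets, not the intersections $\mathcal{J}_n\cap\Lieg^{(m)}$. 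To close your route you would need a \emph{second} chain argument, which you never supply: observe that $[\mathcal{J}\cap\Lieg^{(m)},\mathcal{J}\cap\Lieg^{(m)}]\subseteq[\Lieg^{(m)},\mathcal{J}]\subseteq\mathcal{J}_m$ (with $\mathcal{J}=\bigcup_n\mathcal{J}_n$), so that $(\mathcal{J}_n\cap\Lieg^{(m)}+\mathcal{J}_m)/\mathcal{J}_m$ is an ascending chain of abelian ideals of $\Lieg/\mathcal{J}_m$, which terminates because $\Lieg\in(\max$-$\lhd\,\mathfrak{U})^{Q}$; only then does the standard sum-and-intersection argument yield $\mathcal{J}_{t+1}=\mathcal{J}_t$. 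As written, your proof is incomplete.

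The paper avoids this lifting problem entirely by a different choice of quotient: instead of $\Lieg/\Lieg^{(m)}$ it quotients by $I_m$, the $m$-th term of the chain itself. The one computation that drives everything is
\begin{equation*}
I^{(m+1)}=[I^{(m)},I^{(m)}]\subseteq[\Lieg^{(m)},I]\subseteq I_m,\qquad I=\bigcup_n I_n,
\end{equation*}
which shows $I/I_m$ is solvable of derived length at most $m+1$, hence the whole chain $I_n/I_m$ is a chain of $\mathfrak{U}^{m+1}$-ideals of $\Lieg/I_m$; Lemma~\ref{5.1} then terminates it directly, and termination of $I_n/I_m$ \emph{is} termination of $I_n$ with no lifting step. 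This single inclusion $I^{(m+1)}\subseteq I_m$ is the key idea missing from your plan; with it, the quasi-Noetherian data and the maximal condition are reconciled in one stroke rather than via the two-stage stabilize-and-lift scheme you sketch.
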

\begin{proof}

    Let $\Lieg \in \max$-$\lhd.$ We show that $\Lieg \in (\max$-$\lhd \mathfrak{U})^{Q} \cap (q \max$-$\lhd )$.
    Consider
  $I_{0} \subseteq I_{1} \subseteq I_{2} \subseteq \cdots,$
 an ascending chain satisfying the maximal condition of ideal of $\Lieg$. There exists $m_1 \in \mathbb{N}$ such that $I_{m}=I_{m_1}$,
    for all $m \geq m_1$.
But $I_{m}=I_{m_1},$ for all $m \geq m_1$ implies $\underset{i \in
\mathbb{N}}{\cup} I_{i}= I_{m_1}$, thus
$[\Lieg^{(m_{1})},\underset{i \in \mathbb{N}}{\cup}
I_{i}]=[\Lieg^{(m_{1})},I_{m_1}]
\subseteq I_{m_{1}}$ and $[\underset{i \in \mathbb{N}}{\cup} I_{i},\Lieg^{(m_{1})}]=[I_{m_1},\Lieg^{(m_{1})}] \subseteq I_{m_{1}}$.
Therefore $\Lieg \in q \max$-$\lhd$.\\
    Consider on the other hand, the ascending chain
\begin{center}
        $\frac{I_{0}}{I} \subseteq \frac{I_{1}}{I} \subseteq \cdots,$ \qquad\qquad\qquad $(7)$
    \end{center}

 of $\mathfrak{U}$ ideals of $\Lieg / I,$ for some ideal $I$ of
    $\Lieg$.\\
   From this chain, we obtain an ascending chain
    \begin{center}
        $I_{0} \subseteq I_{1} \subseteq \cdots$ \qquad\qquad\qquad $(7.1)$
    \end{center}
    of ideals of $\Lieg$. Since $\Lieg \in \max$-$\lhd$, then the chain $(7.1)$ terminates, that is, there exists $n_0 \in \mathbb{N}$ such that $I_{n}=I_{n_0}$,
    for all $n \geq n_0$. Thus, $\frac{I_{n}}{I}=\frac{I_{n_0}}{I}$, for all $n \geq n_0$.
    So, the chain $(7)$ terminates. Thus $\Lieg / I \in \max$-$\lhd \mathfrak{U}$ for some $I \lhd \Lieg$ and
    so $\Lieg \in (\max$-$\lhd \mathfrak{U})^{\mathbb{Q}}$. Hence $\Lieg \in (\max$-$\lhd \mathfrak{U})^{\mathbb{Q}} \cap (q
    \max$-$\lhd)$.\\
  Conversely, let $\Lieg \in (\max$-$\lhd \mathfrak{U})^{Q} \cap (q
    \max$-$\lhd)$ and consider the increasing chain
    \begin{center}
    $I_{0} \subseteq I_{1} \subseteq I_{2} \subseteq \cdots$ \qquad\qquad\qquad $(8)$
    \end{center}
   of ideals of $\Lieg$. Set
    $I=\underset{n}{\cup}I_{n}.$
Since $\Lieg$ is quasi-Noetherian, there exists $m\in
\mathbb{N}$ such that $[\Lieg^{(m)},I] \subseteq I_{m}$ and
$[I,\Lieg^{(m)}] \subseteq I_{m}.$ Moreover, we have:
    \begin{center}
        $I^{(m+1)}=[I^{(m)},I^{(m)}]\subseteq [\Lieg^{(m)},I^{(m)}] \subseteq [\Lieg^{(m)},I]
        \subseteq I_{m}$.
    \end{center}
    Thus $I^{(m+1)} \subseteq I_{m},$ and so $\frac{I}{I_{m}} \subseteq
    \frac{I}{I^{(m+1)}}$.
    Therefore $ (\frac{I}{I_{m}})^{(k)} \subseteq (\frac{I}{I^{m+1}})^{(k)}= \frac{I^{(k)}}{I^{(m+1)}}$.
     Taking $ k=m+1 $ we obtain $ \frac{I^{(m+1)}}{I_{m}} \subseteq \frac{I^{(m+1)}}{I^{(m+1)}}=0$.
     Hence $(\frac{I}{I_{m}}) \in \mathfrak{U}^{m+1}.$ That is, $(\frac{I}{I_{m}})^{(m+1)}=0.$
     Consequently,
    \begin{center}
                $\frac{I_{m}}{I_{m}} \subseteq \frac{I_{m+1}}{I_{m}} \subseteq \frac{I_{m+2}}{I_{m}} \subseteq
                \cdots$ \qquad\qquad\qquad $(9)$
    \end{center}
    is an ascending chain of $\mathfrak{U}^{m+1}$ ideals of $\Lieg / I_{m}$.
    Since $\Lieg \in (\max$-$\lhd \mathfrak{U})^{Q}$, by Lemma \ref{5.1}, we have
$ \Lieg \in (\max$-$\lhd \mathfrak{U}^{m+1})^{Q}$  and so $\Lieg
/ I_{m} \in \max$-$\lhd \mathfrak{U}^{m+1}.$ Therefore $(9)$
terminates, and so does $(8)$. Hence $\Lieg \in \max$-$\lhd$.
\end{proof}

\begin{Co}
    Suppose $ \mathfrak{X} $ is a class of Leibniz algebras. If $ \mathfrak{X} $ is Q-closed and
 $ \mathfrak{X} \subseteq \max$-$\lhd \mathfrak{U},$ then  $ q \max $-$ \lhd \cap \mathfrak{X} \subseteq \max $-$\lhd. $

\end{Co}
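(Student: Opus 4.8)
The plan is to obtain this as a short deduction from Theorem \ref{5.2}, which identifies $\max$-$\lhd$ with $(\max$-$\lhd \mathfrak{U})^{Q} \cap (q\max$-$\lhd)$. Given the hypotheses on $\mathfrak{X}$, the single point to establish is the inclusion $\mathfrak{X} \subseteq (\max$-$\lhd \mathfrak{U})^{Q}$; once this is in hand, intersecting with $q\max$-$\lhd$ and invoking Theorem \ref{5.2} finishes the argument.

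To prove $\mathfrak{X} \subseteq (\max$-$\lhd \mathfrak{U})^{Q}$, I would take an arbitrary $\Lieg \in \mathfrak{X}$ and verify the two defining conditions of the operator $(-)^{Q}$, as recalled before Lemma \ref{5.1}. First, membership $\Lieg \in \max$-$\lhd \mathfrak{U}$ is immediate from the hypothesis $\mathfrak{X} \subseteq \max$-$\lhd \mathfrak{U}$. Second, for every ideal $I \lhd \Lieg$, the $Q$-closedness of $\mathfrak{X}$ gives $\Lieg/I \in \mathfrak{X}$, and the same inclusion $\mathfrak{X} \subseteq \max$-$\lhd \mathfrak{U}$ then yields $\Lieg/I \in \max$-$\lhd \mathfrak{U}$. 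Since both conditions hold, $\Lieg \in (\max$-$\lhd \mathfrak{U})^{Q}$, and as $\Lieg$ was arbitrary we conclude $\mathfrak{X} \subseteq (\max$-$\lhd \mathfrak{U})^{Q}$.

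With this inclusion established, I would take any $\Lieg \in q\max$-$\lhd \cap \mathfrak{X}$. Then $\Lieg \in q\max$-$\lhd$ and $\Lieg \in \mathfrak{X} \subseteq (\max$-$\lhd \mathfrak{U})^{Q}$, so $\Lieg \in (\max$-$\lhd \mathfrak{U})^{Q} \cap (q\max$-$\lhd)$, which equals $\max$-$\lhd$ by Theorem \ref{5.2}. Hence $q\max$-$\lhd \cap \mathfrak{X} \subseteq \max$-$\lhd$, as required. I do not anticipate any genuine obstacle here: the entire content of the result is carried by Theorem \ref{5.2}, and the corollary amounts to recognizing that $Q$-closedness together with $\mathfrak{X} \subseteq \max$-$\lhd \mathfrak{U}$ is precisely what upgrades the plain inclusion $\mathfrak{X} \subseteq \max$-$\lhd \mathfrak{U}$ into the hereditary inclusion $\mathfrak{X} \subseteq (\max$-$\lhd \mathfrak{U})^{Q}$.
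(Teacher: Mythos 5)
Your proof is correct and follows essentially the same route as the paper: use $Q$-closedness together with $\mathfrak{X} \subseteq \max$-$\lhd\,\mathfrak{U}$ to get $\mathfrak{X} \subseteq (\max$-$\lhd\,\mathfrak{U})^{Q}$, then intersect with $q\max$-$\lhd$ and apply Theorem \ref{5.2}. If anything, you are slightly more careful than the paper, which attributes the inclusion $\mathfrak{X} \subseteq (\max$-$\lhd\,\mathfrak{U})^{Q}$ to $Q$-closedness alone, whereas you correctly note that both hypotheses are needed.
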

\begin{proof}
    Since $\mathfrak{X}$ is $Q$-closed, we have $\mathfrak{X} \subseteq ( \max$-$\lhd \mathfrak{U})^{Q}$.\\
     Hence, $q \max$-$\lhd \cap \mathfrak{X} \subseteq q \max$-$\lhd \cap (
\max$-$\lhd \mathfrak{U})^{Q}= \max$-$\lhd$ by Theorem \ref{5.2}. Thus $q
\max$-$\lhd \cap \mathfrak{X} \subseteq \max$-$\lhd.$
\end{proof}


\end{document}